\newtheorem{theorem}{Theorem}
\newtheorem*{theorem*}{Theorem}
\newtheorem{lemma}{Lemma}
\newtheorem*{lemma*}{Lemma}
\newtheorem{corollary}{Corollary}
\newtheorem{conjecture}{Conjecture}
\newcommand\be{\begin{equation}}
\newcommand\ee{\end{equation}}
\begin{document}

\title[Prime Difference Champions]{Prime Difference Champions}

\author[ S. Funkhouser]{S. Funkhouser}
\address{SPAWAR Systems Center Atlantic, One Innovation Drive, North Charleston, SC 29419}

\author[D. A. Goldston]{D. A. Goldston}
\address{Department of Mathematics and Statistics, San Jos\'{e} State University, One Washington Square, San Jos\'{e}, CA 95192-0103, USA}

\email{daniel.goldston@sjsu.edu}

\author[D. Sengupta] {D. Sengupta}
\address{Department of Mathematics and Computer Science, Elizabeth City State University, 1704 Weeksville Road, Elizabeth City, NC 27909}

\email{dcsengupta@ecsu.edu}
\author[J. Sengupta]{J. Sengupta}
\address{Department of Mathematics and Computer Science, Elizabeth City State University, 1704 Weeksville Road, Elizabeth City, NC 27909}

\email{jdsengupta@ecsu.edu}

\subjclass[2000]{Primary 11N05; Secondary 11P32, 11N36}

\keywords{Differences between primes; Hardy-Littlewood prime pair conjecture; Jumping champion; Maximal prime gaps; Primorial numbers; Sieve methods; Singular series}

\begin{abstract} 
A Prime Difference Champion (PDC) for primes up to $x$ is defined to be any element of the set of one or more differences that occur most frequently among all positive differences between primes $\le x$.  Assuming an appropriate form of the Hardy-Littlewood Prime Pair Conjecture we can prove that for sufficiently large $x$ the PDCs run through the primorials. Numerical results also provide evidence for this conjecture as well as other interesting phenomena associated with prime differences.  Unconditionally we prove that the PDCs go to infinity and further have asymptotically the same number of prime factors when counted logarithmically as the primorials.  
\end{abstract}

\maketitle

\thispagestyle{empty}

\section{Introduction} 
The study of prime gaps has been historically one of the most insightful probes of the nature of the primes.  
The gaps, referring here strictly to the positive differences between consecutive primes, are a particularly natural subject of investigation because they exhibit global characteristics that are amenable to rigorous analytical treatment.  
Some of the most important theorems related to primes are those concerning the gaps.  
Most prominently, we may state the Prime Number Theorem equivalently in terms of the asymptotic behavior of the average local gap.  

An elemental component of the study of prime gaps is the counting function, 
\be
N(x,d) 
 \mathrel{\mathop:}= \sum_{\substack{p_{n\!+\! 1}\leq x \\ p_{n\!+\!1}-p_n = d}}\! 1 \, ,
\ee
giving the number of gaps of size $d$ within the sequence of primes no greater than some $x$, 
where $p_n$ is the $n$-th prime.  
As an example Figure 1 is a plot of $N(10^5,d)$.  
\begin{figure}
\includegraphics[width=0.7\linewidth]{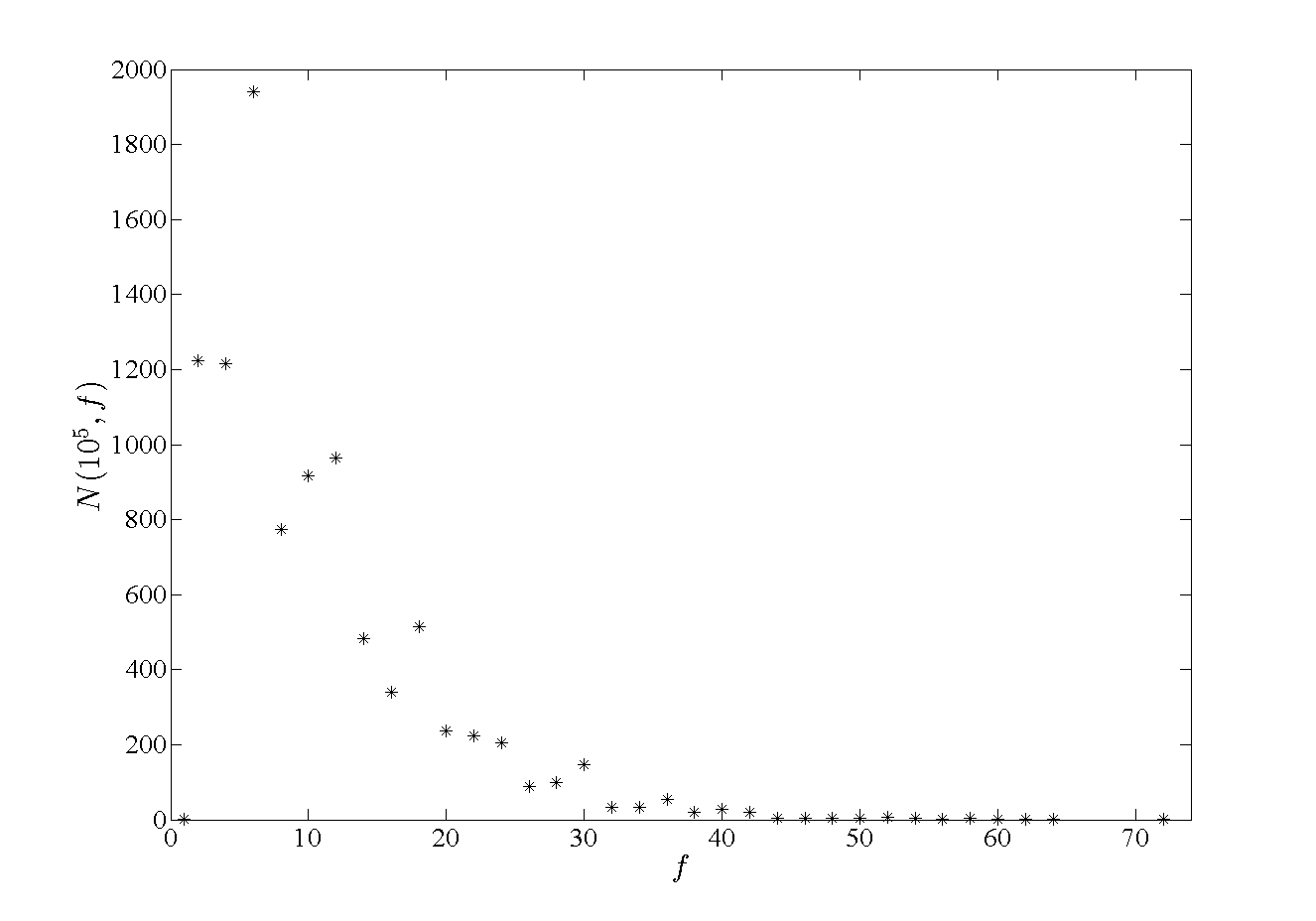}
\caption{ Gap count for $x=10^5$, showing only non-zero $N(x,d)$.
\label{Fig_N1E5} }
\end{figure} 
From $N(x,d)$ we may readily obtain the average gap among primes not exceeding $x$.  
The most commonly occuring gap (or gaps) among primes no greater than some $x$ is the value (or values) of $d$ for which $N(x,d)$ is maximal.  
With 
\be
N^{*}(x)
 \mathrel{\mathop:}= \max_{d} N(x,d) 
\ee
the set 
\be
J^{*}(x)
 \mathrel{\mathop:}= \{ d \colon N(x,d) = N^{*}(x)\} 
\ee 
formally describes the most common gap(s), known as the {\it prime jumping champion(s)} (PJC's),  for a given $x$.  
Figure \ref{Fig_PJC} shows the PJC's for all prime $x\le 1800$.  
\begin{figure}
\includegraphics[width=0.7\linewidth]{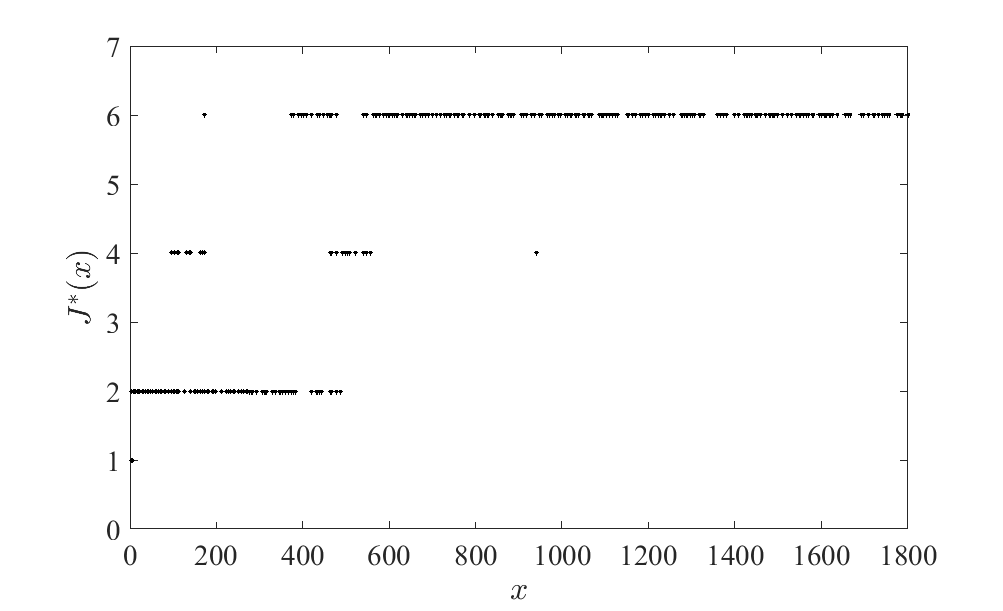}
\caption{ Prime jumping champions for all odd, prime $x\le1800$
\label{Fig_PJC} }
\end{figure}

Several analytical results concerning the PJC's are relevant here.  
In 1999, based on heuristic arguments and extensive numerical studies, Odlyzko, Rubinstein and Wolf (ORW) \cite{OdlyzkoRubinsteinWolf1999} conjectured that  the PJC's greater than unity are $4$ and the primorials, 
\be 
\left\{p_{k}^{\sharp}\right\}_{k=1}^\infty = 2, 6, 30, 210, 2310, \ldots \ , 
\ee
where 
\be  \label{eq5}
p_{k}^{\sharp}:=\prod_{j=1}^k {p_j} 
\ee
is the $k$-th primorial.  
They also advanced a weaker implication of this conjecture, namely that the PJC's are asymptotically infinite and that any given prime divides all sufficiently large PJC's.  
In 1980 Erdos and Straus \cite{ErdosStraus1980} proved, assuming the Hardy-Littlewood prime pair conjecture (HLPPC), that the PJC's tend to infinity.  
In 2011 Goldston and Ledoan extended the method described in \cite{ErdosStraus1980} to give a proof that any given prime will divide all sufficiently large jumping champions \cite{GoldstonLedoan2011}.  
Soon thereafter they gave also a proof that sufficiently large PJC's run through the primorials assuming a sufficiently strong form of the Hardy-Littlewood prime pair and prime triple conjectures \cite{GoldstonLedoan2015}.  

It is important to note that, despite the body of work devoted to the prime gaps, we know virtually nothing unconditionally about the PJC's. 
Numerically 6 is the PJC for $947\le x\le 10^{15}$, and it unlikely that numerical work will ever find any PJC other than 6 for ranges where computations are feasible.  
Unconditionally we can not even disprove that the PJC is 2 for all sufficiently large $x$, or eliminate any given even number as sometimes being a PJC.  
The study of differences between primes, however, need not be restricted to differences between consecutive primes.  
In fact, the HLPPC is for the generalized (positive) differences between primes not exceeding a given $x$, and it is natural to first study pair differences in a given sequence before examining more complicated gap questions.  

The purpose of this work is to present a broad investigation of the prime differences and their so-called champions.  
We find that, in contrast to the situation with the gaps, rich behaviors are evident in computationally accessible ranges and, more importantly, both conditional and unconditional theorems may be formulated to explain many prominent characteristics of the PDC's.  
The work is organized as follows.  
Section \ref{S_Num} contains definitions of the central terms and some basic numerical studies, in which significant regularities are immediately evident.  
In Section \ref{S_HLPPC} is found an overview of the HLPPC, with appropriate adaptations necessary for this present analysis.  
Section \ref{S_NumHL} contains the results of several numerical studies in support of the HLPPC.  
Section \ref{S_Sketch} outlines how the HLPPC may be used to probe the nature of the prime differences.  
In Section \ref{S_ProofOfTheorem1} it is proven under condition of the HLPPC that the PDC's run through the primorials for sufficiently large $x$.  
Section \ref{S_LogSums} reviews some relevant properties of logarithmic sums.  
In Section \ref{S_PDCsInfinity} it is proven unconditionally that the PDCs tend to infinity and have many prime factors.  

\section{Counting prime differences \label{S_Num}}
Let $p$ and $p'$ denote primes and let $d$ be a positive integer. 
Analogously to $N(x,d)$ we define the counting function 
\be
G(x,d)
 \mathrel{\mathop:}= \sum_{\substack{p,p'\leq x \\ p'-p = d}}\! 1
\ee
to give the number of prime differences equal to $d$ among primes no greater than $x$.  
Figure \ref{Fig_G1E5} shows the count of prime differences for $x=10^5$.  
For convenience we have excluded all odd differences, being those associated with the anomalously even $p_1=2$.  
Figure \ref{Fig_G1E5_zoom} shows a zoomed-in view of Figure \ref{Fig_G1E5}.  
\begin{figure}
\includegraphics[width=0.7\linewidth]{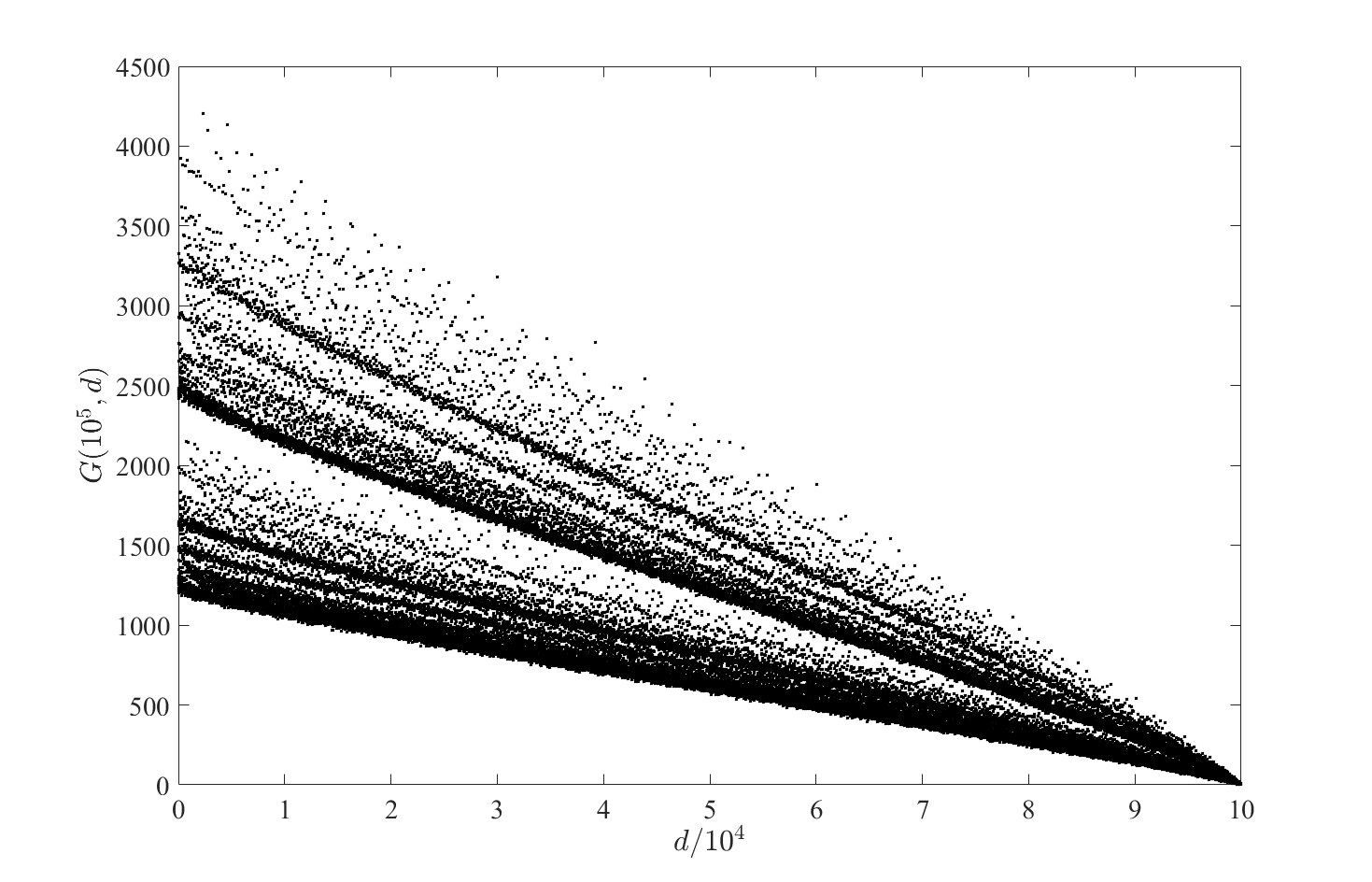}
\caption{ Difference count for $x=10^5$
\label{Fig_G1E5} }
\end{figure}
\begin{figure}
\includegraphics[width=0.7\linewidth]{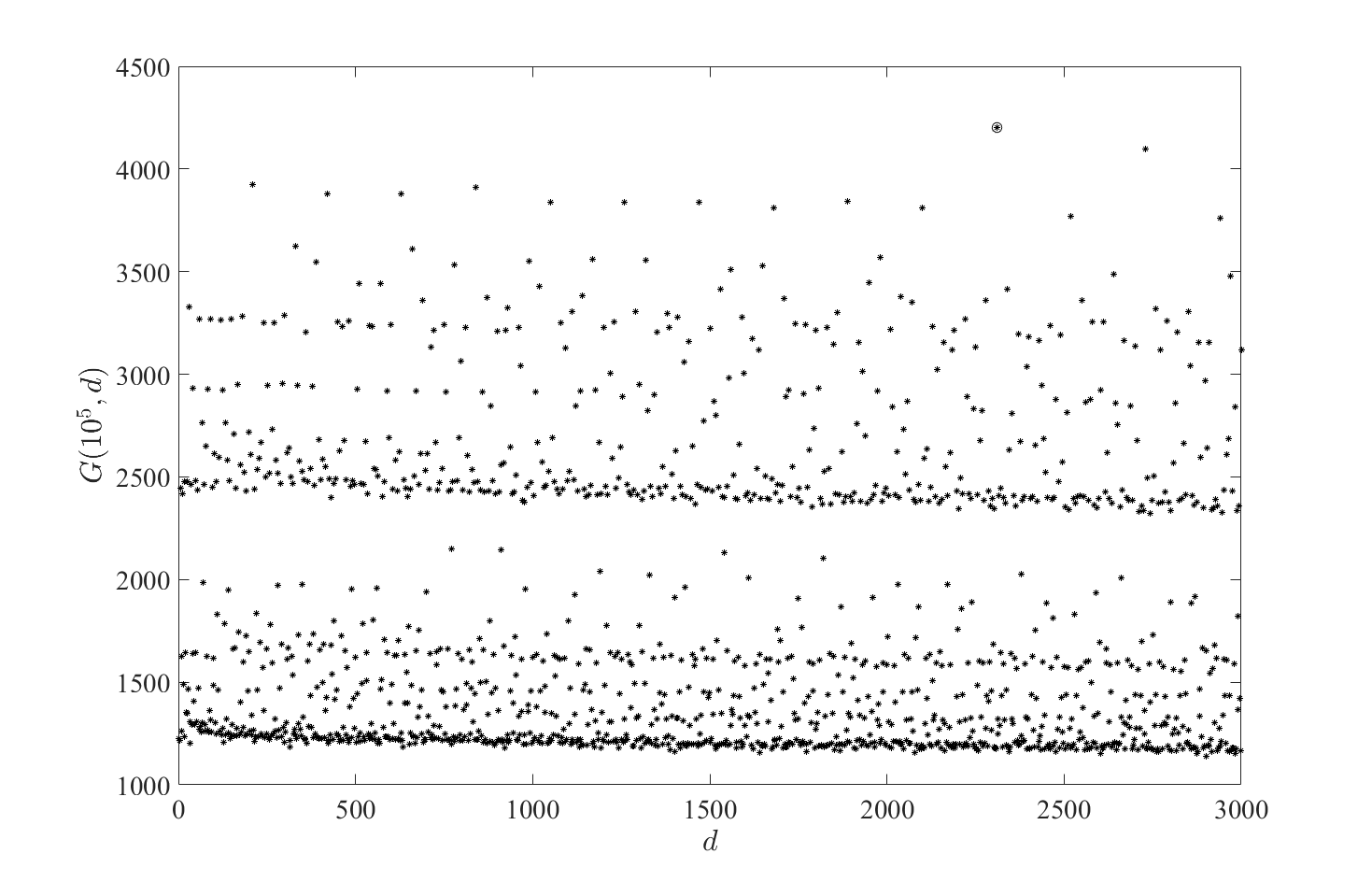}
\caption{Detail of difference count for $x=10^5$; the PDC is $2310$ and the corresponding count is circled.  
\label{Fig_G1E5_zoom} }
\end{figure}
Figures \ref{Fig_G1E5} and \ref{Fig_G1E5_zoom} are representative for all sufficiently large $x$ within the computationally explored range, which so far extends to $x=2\times 10^8$.  

A prime difference champion for a given $x$ is any $d$ for which $G(x,d)$ attains its maximum,  
\be
G^{*}(x)
 \mathrel{\mathop:}= \max_{d} G(x,d) \, .
\ee
Analogously to $J^*(x)$ we define formally the PDC (or PDC's) for a given $x$ by the set   
\be
D^{*}(x)
 \mathrel{\mathop:}= \{ d \colon G(x,d) = G^{*}(x)\} \, .
\ee
As $D^*(x)$ is a step function of $x$ with possible steps only when $x$ is a prime, we only need consider values of $x=p\ge 3$ in examining the behavior of the PDC's.    
For example we have $D^{*}(3) =\{ 1\}$, $D^{*}(5) =\{ 1, 2, 3\}$,  $D^{*}(7) =\{ 2\}$,  $D^{*}(11) =\{ 2,4\}$,  $D^{*}(13) =\{ 2\}$, $D^{*}(17) =\{ 2,4,6\}$. 
Figure \ref{Fig_PDC} is a plot of the PDCs for all prime $x\le 2\times 10^8$.  The dashed and dot-dashed lines in Figure (\ref{Fig_PDC}) are associated with Theorem 1 as introduced in Section \ref{S_Sketch}. 
\begin{figure}
\includegraphics[width=0.7\linewidth]{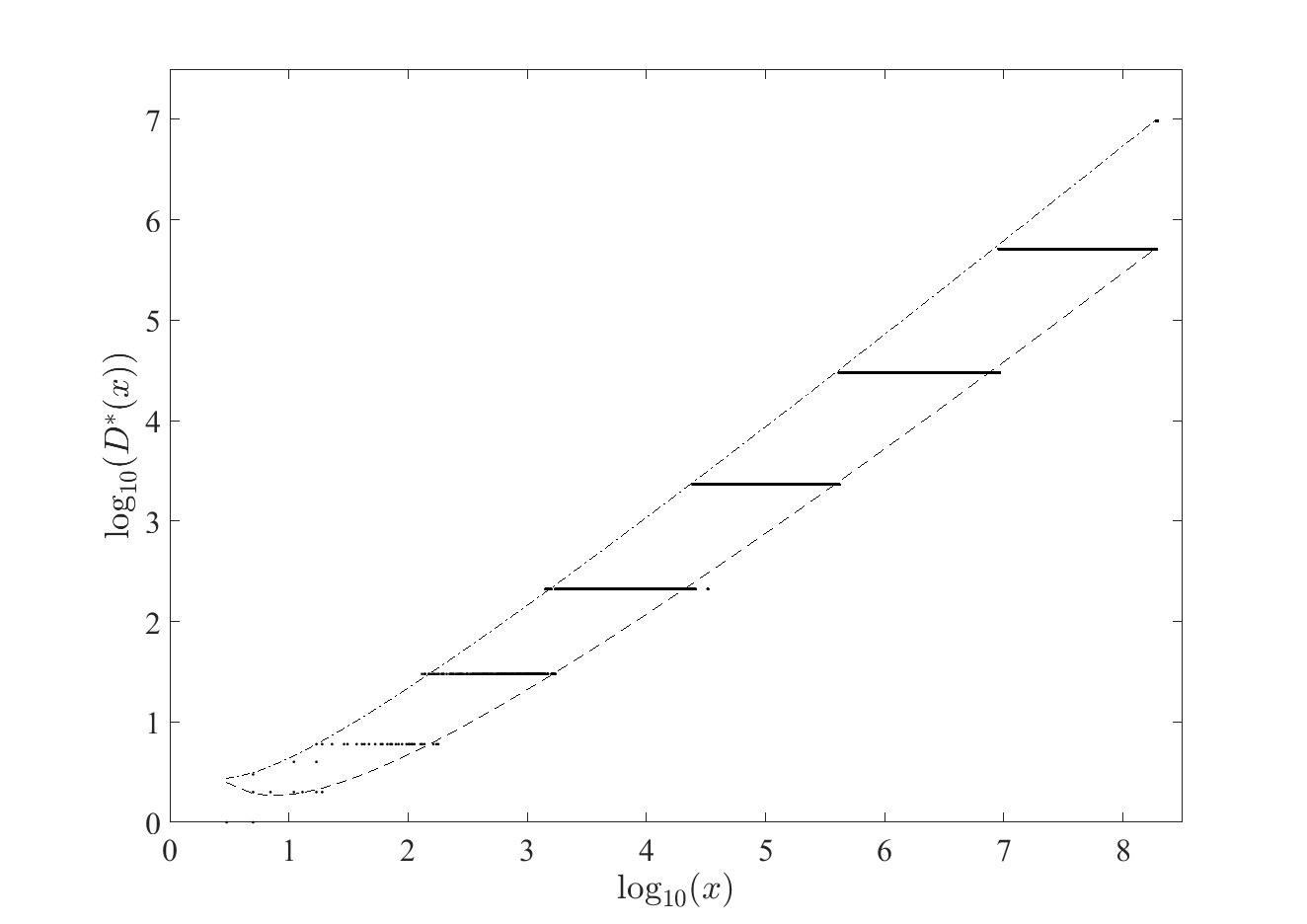}
\caption{ Prime difference champions for all odd, prime $x$ no greater than $2\times10^8$.  
The dot-dashed line is a plot of $\log_{10}( x/\log(x))$ and the dashed line is a plot of $\log_{10}(x/\log^2(x))$, in association with Theorem 1.  
\label{Fig_PDC} }
\end{figure}

The basic computational results exemplified in Figures \ref{Fig_G1E5} through \ref{Fig_PDC} intimate several new global properties of the prime differences and the PDC's.  
For sufficiently large $x$, $G(x,d)$ exhibits a rich structure bearing a statistically strong signature of periodicity \cite{FCSW}.  
As shown in Section \ref{S_HLPPC} we may attribute the periodic structure to behavior encapsulated in the HLPPC.  
Aside from $1$, $3$ and $4$ all PDC's in the numerically explored range of $x$ are primorials.  
Additionally the PDC's run through the primorials in a broadly step-wise manner as $x$ increases.  

It is instructive to review some of the details of the observed transitions in the PDC's.  
Table \ref{tablePDCtrans} shows the smallest $x$ and the largest known prime $x$ for which $D^*(x)$ includes each respective primorial within the observed range.  
Because the last observed instance of $9699690$ represents only the arbitrarily chosen end of calculations it does not represent any meaningful transition point and is ommitted from the table.  
Note that the step-wise transitions occur for $x$ in the vicinity of $p_n$ such that $n$ is roughly equal to the next largest primorial.  For example, the transition from $210$ to $2310$ begins at $n=2718$, and so on \cite{FCSW}.  
This empirical trend may be understood as a condition of the HLPPC, as formalized at the end of Section \ref{S_Sketch}.  
\begin{table}[ht] 
\caption{Transition points for PDC's}
\renewcommand\arraystretch{1.0}
\noindent\[
\begin{array}{|r|c|c|}
\hline
\text{primorial}	&	\text{smallest $x$ for}	&	\text{largest known prime $x$ for}	\\
					&	\text{occurrence as PDC}	&	\text{occurrence as PDC}		\\
\hline
6		&	p_7=17	&	p_{41}=179 \\
30		&	p_{32}=131	&	p_{269}=1723	\\
210		&	p_{224}=1423	 &	p_{3523}=32843  \\
2310	&	p_{2718}=24499		&	p_{35000}=414977  \\
30030	&	p_{34903}=413863	&	p_{609928}=9120277	\\
510510	&	p_{607867}=9087131	&	p_{10657197}=191945597	\\
9699690&	p_{10561154}=190107653		&	- \\
\hline
\end{array}
\]
\label{tablePDCtrans}
\end{table}

Within the transition regions the PDC is observed to exhibit some oscillatory behavior between the two primorials involved in the transition.  
For example, Fig. \ref{Fig_PDC_2310_30030} is a plot of $D^*(x)$ beginning with the first occurence of $30030$ and ending at the last occurence of $2310$.  
Note that during the transitions the PDC is observed only to be one or both of the two respective primorials involved.  
\begin{figure}
\includegraphics[width=0.7\linewidth]{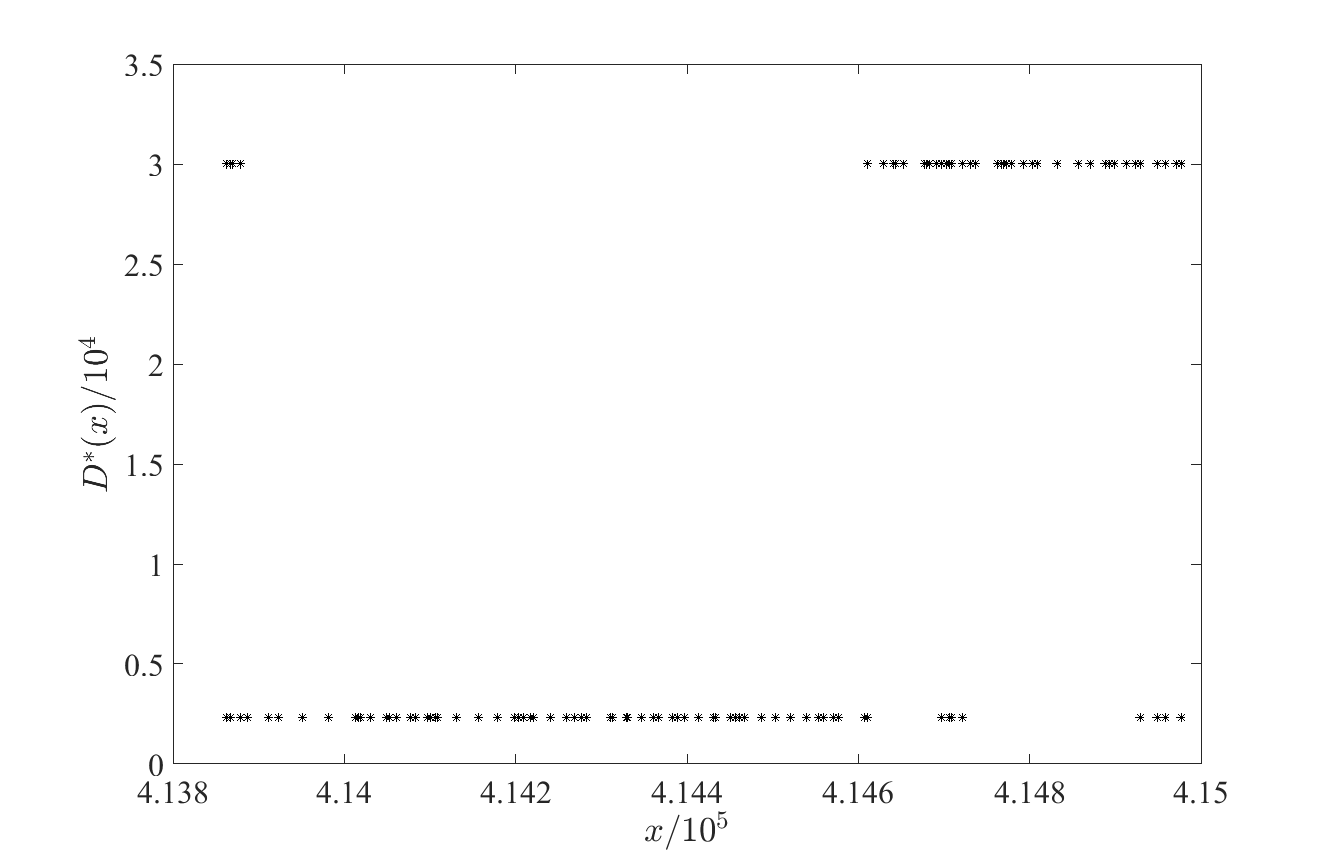}
\caption{ Zoom-in view of $D^*(x)$ over region where transition from $210$ to $2310$ occurs.   
\label{Fig_PDC_2310_30030} }
\end{figure}

\section{The Hardy-Littlewood prime pair conjecture \label{S_HLPPC}}
Hardy-Littlewood in their famous paper Partitio Numerioum III \cite{HardyLittlewood1923} conjectured an asymptotic formula for $G(x,d)$. This is often called the prime pair conjecture, and states that 
\be\label{HLpairs}
G(x,d) \sim\mathfrak{S}(d) \frac{x}{(\log x)^2}, \quad \mbox{as $x \to \infty$}.
\ee
The singular series $\mathfrak{S}(d)$ is defined for all integers $d \neq 0$ by
\be
\label{3.2} \mathfrak{S}(d) = \left\{ \begin{array}{ll}
      {\displaystyle 2C_2\prod_{\substack{p \mid d \\ p > 2}} \left(\frac{p - 1}{p - 2}\right),} & \mbox{if $d$ is  even, $d \neq 0$;} \\
      0,   & \mbox{if $d$ is odd;} \\
\end{array}
\right.
\ee
where
\be\label{3.3}
C_2
 = \prod_{p > 2}\left(1 - \frac{1}{(p - 1)^2}\right)
 = 0.66016\ldots
\ee
with the product extending over all primes $p > 2$.
We see that for $d$ a positive even integer the singular series may be written as
\be \label{singprod}
\mathfrak{S}(d) = 2C_2\prod_{\substack{p \mid d \\ p > 2}} \left(1 + \frac{1}{p - 2}\right),
\ee
and therefore $\mathfrak{S}(d)$ has local maximums when $d$ is a primorial. 

In the case of the prime number theorem we know that we obtain a better approximation by assuming that a prime $p$ has a density or probability of occuring of $\frac{1}{\log p}$ rather than taking the constant density $\frac{1}{\log x}$ for all the primes up to $x$.   We expect the same is true for prime pairs, and Hardy and Littlewood  conjectured that one should replace 
\be 
\frac{x}{(\log x)^2}  \quad \text{by}\quad   \text{li}_2(x) =  \int_2^x\frac{dt}{(\log t)^2} , 
\ee 
in which case the conjecture should hold with a much smaller error term. 

Although Hardy and Littlewood in this case did not specifically consider the situation where $d=d(x) \to \infty$, it is reasonable to suppose that a form of the HLPPC will hold in this situation.  We see in the definition of $G(x,d)$ that if $d> 0$ then the conditions $p, p'\le x$ and $p'= p+d$ implies that $p\le x-d$. Hence we see for $d> 0$ that 
\be 
G(x,d) =  \sum_{\substack{ p\le x-d \\ p+d \text{  is prime}}}1, 
\ee
and thus we may  conjecture for $d>0$ that since $p$ has density $\frac{1}{\log p}$ and $p+d$ has density $\frac{1}{\log (p+d)}$
\be\label{3.4}
G(x,d)  =\mathfrak{S}(d) \int_2^{x-d} \frac{dt}{\log t \log (t+d)} + E(x,d), \quad \mbox{as $x \to \infty$}, 
\ee 
where $E(x,d)$ represents an error term.  
For convenience let us define 
\be 
I(x,d) := \int_2^{x-d} \frac{dt}{\log t \, \log (t+d)}
\ee 
and
\be 
\widetilde{G}(x,d) := \mathfrak{S}(d) I(x,d) .
\ee
such that 
\be
G(x,d) = \widetilde{G}(x,d) + E(x,d) \, ,
\ee
where $\widetilde{G}(x,d)$ represents presumably the asymptotic difference count.  

A strong conjecture is that for $ 2\le d \le x - x^\epsilon$
\be  
E(x,d) \ll  (x-d)^{\frac12+\epsilon}.
\ee
The actual conjecture we need to resolve the present problems concerning the PDC's is much weaker than this, and is stated appropriately as follows.  
\begin{conjecture}  We have that \eqref{3.4} holds with  $E(x,d) = o( \frac{x}{(\log x)^4})$  uniformly for  $ 2\le d \le \frac89 x$.
\label{Cmain}
\end{conjecture}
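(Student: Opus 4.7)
The plan is to attack Conjecture~\ref{Cmain} via the Hardy--Littlewood circle method. Writing $S(\alpha) = \sum_{p \le x} e(p\alpha)$ for the exponential sum over primes, the orthogonality relation gives $G(x,d) = \int_0^1 |S(\alpha)|^2 e(-d\alpha)\,d\alpha$ up to a negligible prime-power and diagonal contribution. First I would decompose the unit interval into major arcs, consisting of small neighborhoods of rationals $a/q$ with $q$ at most some parameter $Q = Q(x)$, and minor arcs forming the complement. On the major arcs, standard Siegel--Walfisz-type estimates for primes in arithmetic progressions, together with a Ramanujan-sum expansion, should produce the conjectured main term $\mathfrak{S}(d)\,I(x,d)$ with an error of size $O\!\left(x\exp(-c\sqrt{\log x})\right)$, comfortably within the allowed tolerance $o(x/(\log x)^4)$.

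The real work lies on the minor arcs. One would hope to exploit cancellation in $S(\alpha)$ through Vinogradov's bilinear estimates or their Vaughan/Heath--Brown refinements, combined with Parseval's identity $\int_0^1 |S(\alpha)|^2\,d\alpha = \pi(x)$, in order to bound the minor-arc contribution to $G(x,d)$ by $o(x/(\log x)^4)$ uniformly in $d \le \tfrac{8}{9}x$. The uniformity in $d$ is delicate: although $I(x,d)$ is of order $x/(\log x)^2$ throughout the range $d \le \tfrac{8}{9}x$ (this being the purpose of the restriction), the minor-arc estimate must not degrade as $d$ grows --- which is essentially automatic, since the minor-arc bound depends on $\|S\|_\infty$ off the major arcs and has no direct $d$-dependence beyond the oscillatory factor $e(-d\alpha)$. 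An alternative route would proceed through an explicit formula expressing $G(x,d)$ as a sum over zeros of Dirichlet $L$-functions attached to characters modulo $q \mid d$; under GRH the resulting square-root-type error terms trivially subsume $o(x/(\log x)^4)$, but without GRH one is thrown back on zero-density estimates that currently appear inadequate.

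The main obstacle, which I expect to be insurmountable with present technology, is precisely the twin prime problem embedded in the conjecture: even the single case $d = 2$ implies the twin prime conjecture, and no known method produces even a nontrivial lower bound of the correct order of magnitude for $G(x,d)$ at any fixed small even $d$. The minor-arc contribution simply cannot presently be shown to be of smaller order than the putative main term. My proposal is therefore best understood as a framework within which Conjecture~\ref{Cmain} would naturally fall, rather than as a route to an unconditional proof; any actual proof would constitute a breakthrough comparable to resolving the Hardy--Littlewood prime pair conjecture itself, and the authors accordingly take it as a working hypothesis rather than attempting to establish it here.
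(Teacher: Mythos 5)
The statement you were asked to prove is a \emph{conjecture}, and the paper itself offers no proof of it: it is introduced as a working hypothesis (a quantitatively strengthened, uniform form of the Hardy--Littlewood prime pair conjecture) on which Theorem 1 and its corollary are conditioned. Your assessment is therefore correct and matches the paper's treatment --- the circle-method framework you sketch is the standard heuristic justification for the main term $\mathfrak{S}(d)\,I(x,d)$, and your observation that even the case $d=2$ contains the twin prime conjecture is exactly why no proof is attempted or currently possible.
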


\section{Numerical tests of the Hardy-Littlewood conjecture \label{S_NumHL}}
Before proceeding to the proofs it is worthwhile to examine some numerical results related to the behaviors expected of the HLPPC, as expressed in Conjecture \ref{Cmain}.  
The studies described here provide novel, detailed tests of the validity of the HLPPC.  

First let us examine the representative behaviors of the terms on the right-hand side of \eqref{3.4}.  
Figure \ref{Fig_Gana1E5} is a plot of $\widetilde{G}(10^5,d)$, for comparison to Figure \ref{Fig_G1E5}.  
Figure \ref{Fig_E1E5} shows the corresponding error term, $E(10^5,d)$.  
Finally Fig. \ref{Fig_I1E5} shows $I(10^5,d)$.  
The behaviors shown in these figures are representative for sufficiently large $x$, and the precision of the numerical integrations is such that the associated uncertainties are negligible on the scales of all of the figures.  

\begin{figure}
\includegraphics[width=0.7\linewidth]{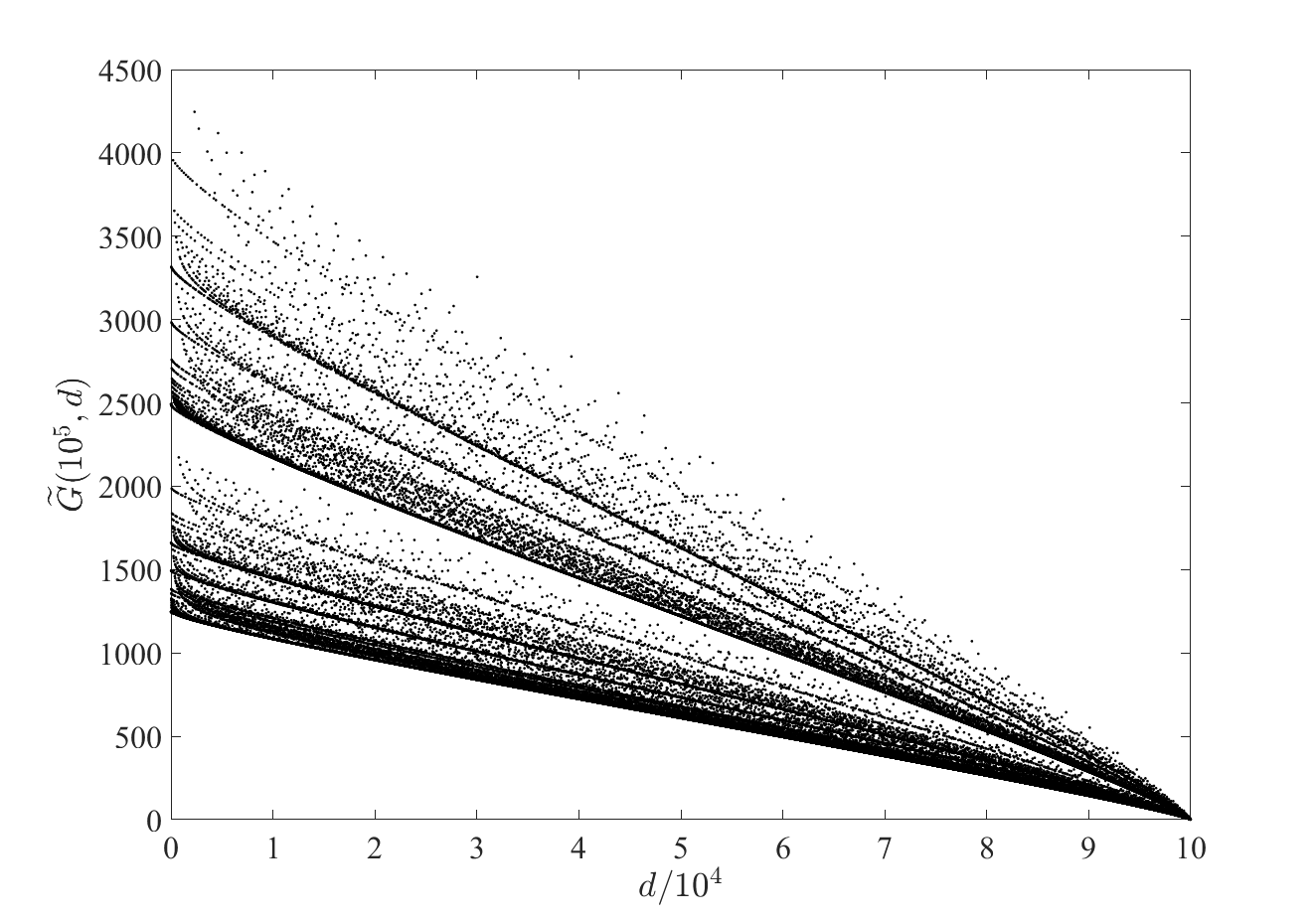}
\caption{Asymptotic difference count, $\widetilde{G}(x,d)$, for $x=10^5$.  Compare to Fig.\ref{Fig_G1E5}.  
\label{Fig_Gana1E5} }
\end{figure}

\begin{figure}
\includegraphics[width=0.7\linewidth]{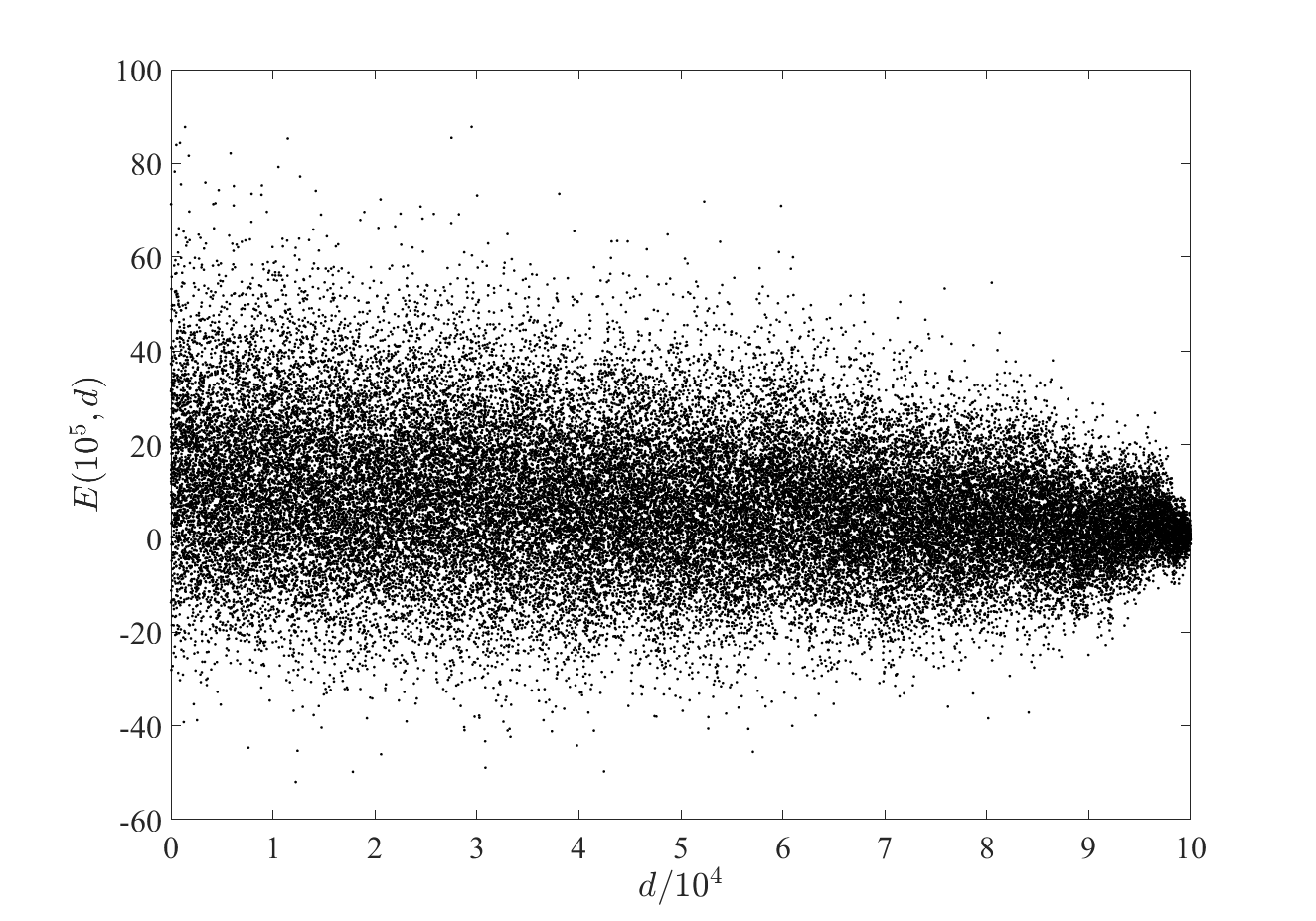}
\caption{ Error term associated with asymptotic count for $x=10^5$.  
\label{Fig_E1E5} }
\end{figure}

\begin{figure}
\includegraphics[width=0.7\linewidth]{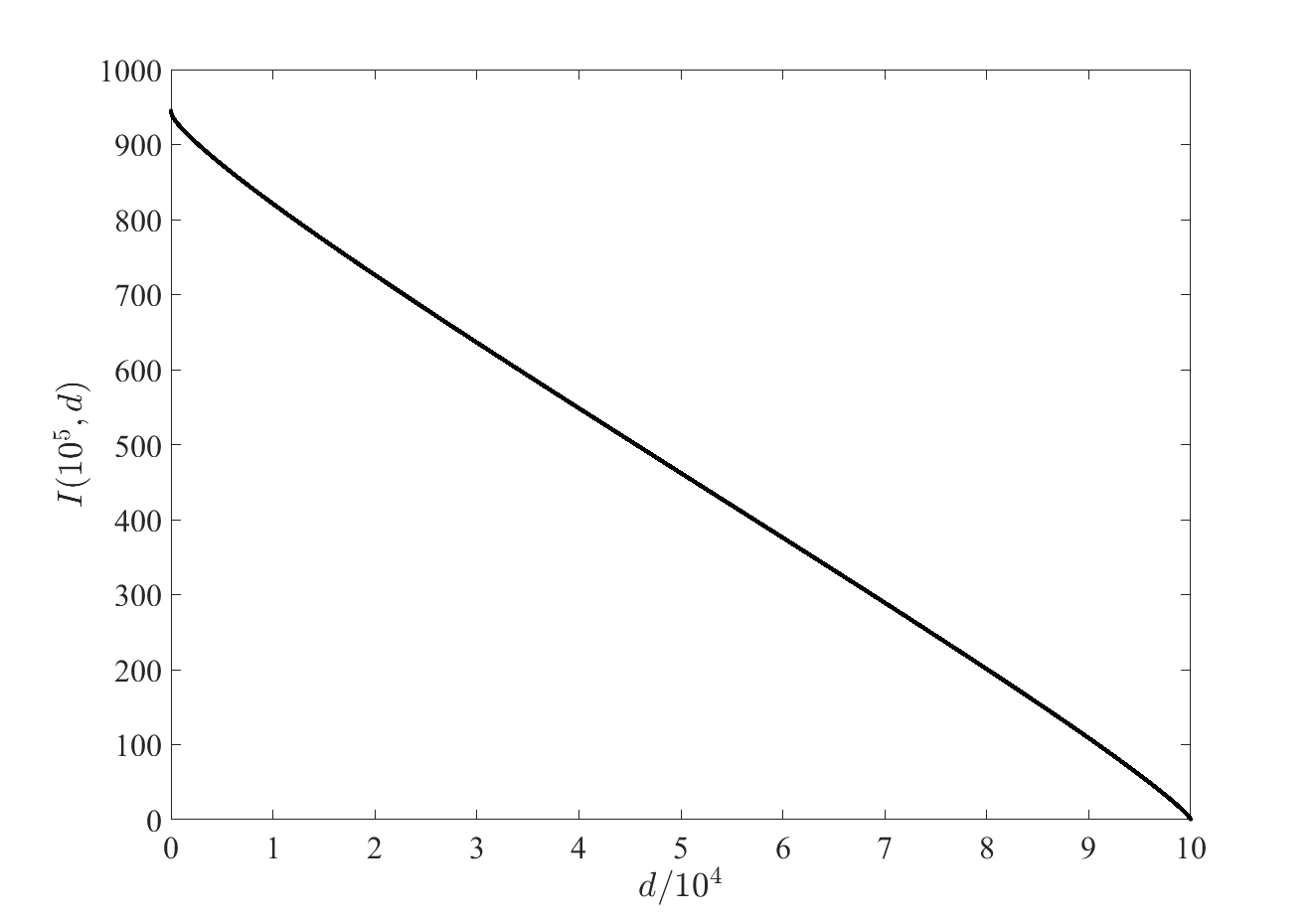}
\caption{Numerically computed integral term for $x=10^5$.  
\label{Fig_I1E5} }
\end{figure}

Next, note that we have 
\be
\sum_{d}{G(p_n,d)} = \frac{n(n-1)}{2}
\label{sumG}
\ee
by construction, where summations over $d$ are taken for $d=1,2,4,6,8,...$.  In accordance with the HLPPC we therefore expect 
\be
\sum_{d}{\widetilde{G}(p_n,d)} \sim \frac{n^2}{2} \, .
\label{Eq_sumGasym}
\ee
In order to test (\ref{Eq_sumGasym}) it is appropriate to define the relative error 
\be
\mu(x) := \frac{   \sum_{d}{  \left( \widetilde{G}(x,d) - G(x,d) \right) } }{ \sum_{d}{  G(x,d)  }}
\ee
Figure \ref{Fig_mu} is a plot of $\mu(x)$ for roughly logarithmically spaced, prime $x\in[10^4, 10^7]$.  
The dashed line is $1/\sqrt{\pi(x)}$, where $\pi(x)$ is the usual prime counting function, hence $\pi(p_n)=n$. 
An analysis of the possible relationship between $\mu(x)$ and a function of the form $c_{\mu}/\sqrt{\pi(x)}$, for some constant $c_{\mu}$, is reserved for future work.  
It is sufficient here to note that $\mu(x)$ would vanish asymptotically if the observed trends should persist {\it ad infinitum}.  
\begin{figure}
\includegraphics[width=0.7\linewidth]{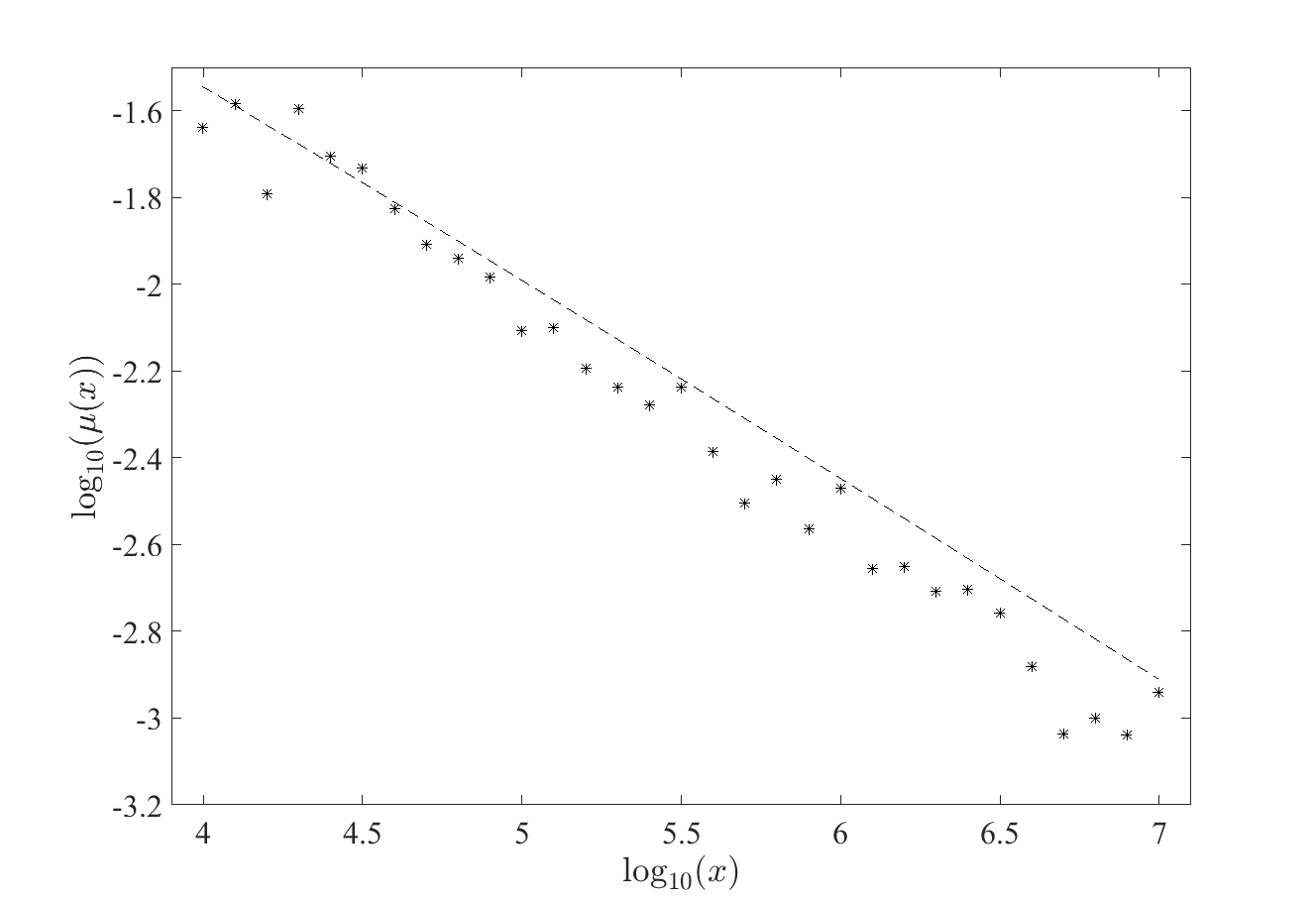}
\caption{ Relative error in the sum of $\widetilde{G}(x,d)$, for certain $x=p_n$, plotted with asterisks.  
The dashed line is a plot of $1/\sqrt{\pi(x)}$, in corresponding logarithmic fashion.
\label{Fig_mu} }
\end{figure}

It is also instructive to consider the variance, 
\be
\nu(x):= \sum_d{\left(  \widetilde{G}(x,d)-G(x,d)   \right)^2}   \, .
\ee
Figure \ref{Fig_nu} is a plot $\nu(x)/\pi(x)^2$ for the same particular $x=p_n$ as in Fig. \ref{Fig_mu}.  
Note that the plotted points are all in the vicinity of $0.16$.  
The asymptotic behavior of $\nu(x)$ and the apparent broad proportionality between $\nu(x)$ and $\pi(x)^2$ are reserved for future studies.  
\begin{figure}
\includegraphics[width=0.7\linewidth]{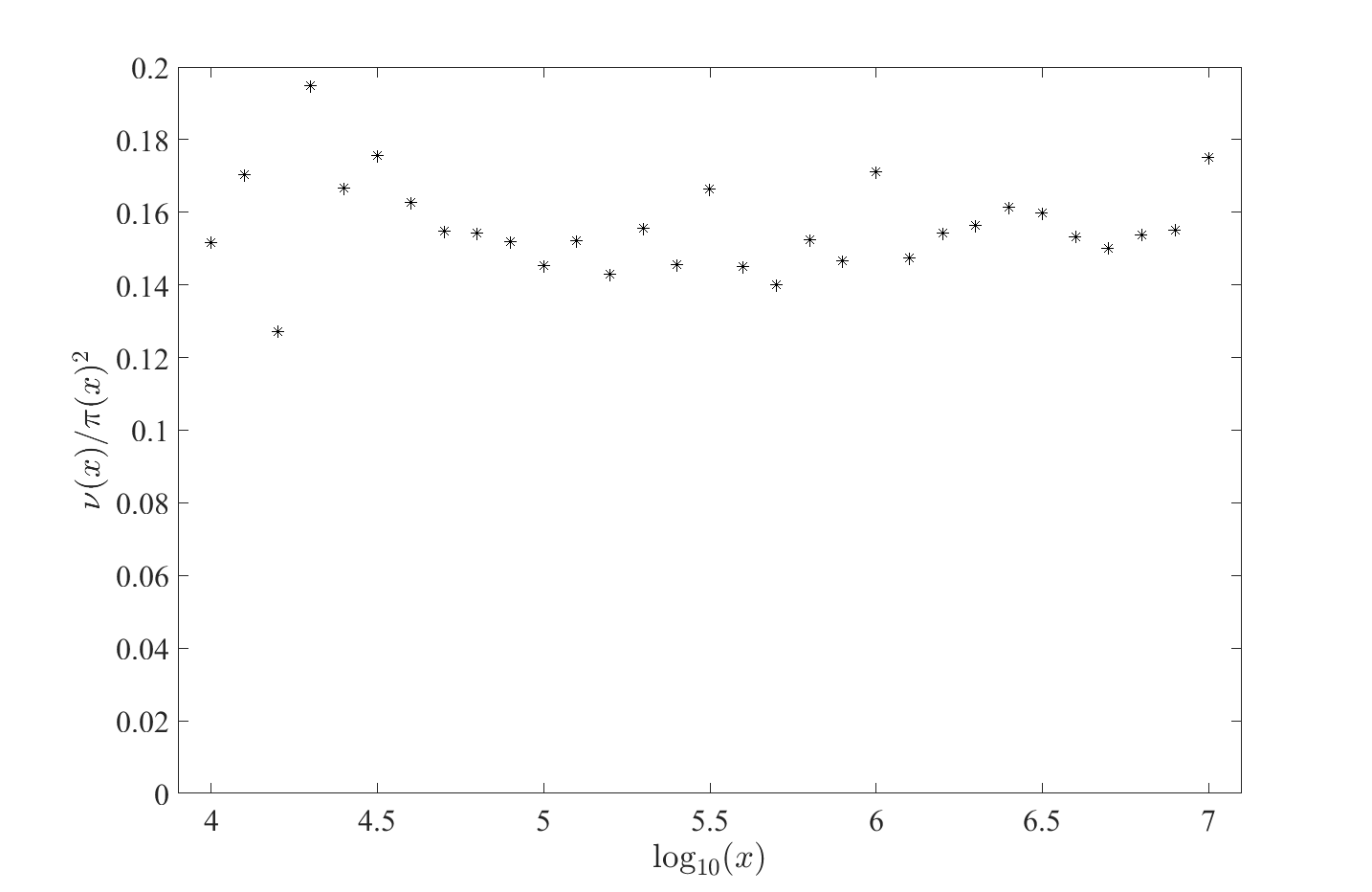}
\caption{Plot of variance, $\nu(x)$, divided by $\pi(x)^2$ for certain $x=p_n$.  
\label{Fig_nu} }
\end{figure}

\section{Sketch of Solution of the PDC Problem using Conjecture 1 \label{S_Sketch}}
To determine the PDCs, we use Conjecture 1 to obtain formulas for $G(x,d)$ in various ranges of $d$. 
We will assume here $2 \le d \le \frac89 x$. We want to evaluate $I(x,d)$ asymptotically, but we will never need to be more accurate than terms with size smaller than $o(\frac{x}{(\log x)^4})$. Hence, we take 
\be I(x,d) = \int_{\frac{x}{(\log x)^5}}^{x-d} \frac{dt}{\log t\, \log (t+d)}+ O(\frac{x}{(\log x)^5}),\ee
and use integration by parts to obtain
\be 
I(x,d) = \frac{x-d}{\log x\, \log (x-d)} +  \int_{\frac{x}{(\log x)^5}}^{x-d} \frac{(\log (t+d) + \frac{t}{t+d}\log t) dt}{(\log t \, \log (t+d))^2}+ O(\frac{x}{(\log x)^5}).
\ee
Since $ \frac{t}{t+d} = 1 -\frac{d}{t+d}$, we obtain 
\be 
I(x,d) = \frac{x-d}{\log (x-d)\, \log x} +  \int_{\frac{x}{(\log x)^5}}^{x-d} \frac{dt}{(\log t)^2 \log (t+d)}+  \int_{\frac{x}{(\log x)^5}}^{x-d} \frac{dt}{\log t ( \log (t+d))^2}+  O(\frac{d}{(\log x)^2})+ O(\frac{x}{(\log x)^5}).
\ee
We now define, for fixed integers $m,n\ge 1$, 
\be 
I_{m,n} := \int_{\frac{x}{(\log x)^5}}^{x-d} \frac{dt}{(\log t)^m (\log (t+d))^n},
\ee
and then obtain  by the same integration by parts argument as above 
\be 
I_{m,n} = \frac{x-d}{(\log x)^n\, (\log (x-d))^m } +  mI_{m+1,n} + nI_{m,n+1} + O(\frac{d}{(\log x)^2})+ O(\frac{x}{(\log x)^5}).
\ee
Furthermore we have the trivial estimate  $I_{m,n} \ll  \frac{x}{(\log x)^{m+n}}$. 
Hence 
\be \begin{split} I(x,d) &= \frac{x-d}{\log x\, \log (x-d)} + I_{2,1} +I_{1,2}  +  O(\frac{d}{(\log x)^2})+ O(\frac{x}{(\log x)^5})\\&
=  \frac{x-d}{\log x\, \log (x-d) } + \frac{x-d}{\log x  (\log (x-d))^2 }+ \frac{x-d}{(\log x)^2 \log (x-d) }+  2I_{3,1} +I_{2,2}  + I_{2,2} + 2 I_{1,3}+\\ & \hskip 1in + O(\frac{d}{(\log x)^2})+ O(\frac{x}{(\log x)^5})\\&
= (x-d)\bigg(\frac{1}{\log x\, \log (x-d) } + \frac{1}{(\log x)^2 \log (x-d) }+ \frac{1}{\log x  (\log (x-d))^2 }+ \\& + \frac{2}{\log x  (\log (x-d))^3 }+\frac{2}{(\log x)^2 (\log (x-d))^2 } + \frac{2}{(\log x)^3\, \log (x-d) }+  O(\frac{d}{x(\log x)^2})+ O(\frac{1}{(\log x)^5})\bigg).\end{split}
\ee 
Since for $2\le d\le \frac89 x$, 
\be \log (x-d) = \log x + \log(1- \frac{d}{x}) = \log x +O(\frac{d}{x}) ,\ee
we conclude that 
\be 
\begin{split} I(x,h) &=    (x-d)\left( \frac{1}{(\log x)^2}+\frac{2}{(\log x)^3} +\frac{6}{(\log x)^4}+ O(\frac{d}{x(\log x)^2}) +O(\frac{1}{(\log x)^5}\right)\left(1 + O(\frac{d}{x\log x})\right) \\&
=\frac{x-d}{(\log x)^2}\left( 1  +\frac{2}{\log x}+\frac{6}{(\log x)^2} +O(\frac{1}{(\log x)^3})+ O(\frac{d}{x\log x}) \right).
\end{split} 
\ee
Letting 
\be  H(x,d) := \frac{I(x,d)}{x-d}(\log x)^2  =  1  +\frac{2}{\log x}+\frac{6}{(\log x)^2} +O(\frac{1}{(\log x)^3})+ O(\frac{d}{x\log x}) ,\end{equation}
 Conjecture 1 takes the form that for $2\le d\le \frac89 x$,
\begin{equation} \label{G-formula}  G(x,d)  =\mathfrak{S}(d)\frac{x-d}{(\log x)^2}H(x,d)\left(1 +o(\frac{1}{(\log x)^2})\right) . 
\ee
This will be the formula we use in what follows. 

Before proceeding to the proof, we explain how this formula implies our theorem. The factor $\frac{H(x,d)}{(\log x)^2}$ is essentially constant and can be ignored.  From \eqref{singprod} the singular series $\mathfrak{S}(d)$ increases on the sequence of primiorials and therefore
if $d< p_{k}^{\sharp}$ then $ \mathfrak{S}(d) < \mathfrak{S}(p_{k}^{\sharp})$.  Thus $G(x,d)$ will also grow on the sequence of primorials as long as $d$ is small enough that the linear decreasing factor $(x-d)$ does not overwhelm the increase from the singular series.  From \eqref{eq5} and the Prime Number Theorem we have
\be \label{eq34} \log p_{k}^{\sharp} = \sum_{p\le p_k} \log p \sim p_k, \qquad \text{as} \ \ k\to \infty,  \ee
which relates the primorial to the primes that constitute it. 
The critical range is when $ x^{1-o(1)}\le p_{k}^{\sharp} \le x$ and thus here $ p_k \sim \log x$ .   The maximum of $G(x,d)$ will occur at the primorial where 
\be 
\mathfrak{S}(p_{k}^{\sharp})(x- p_{k}^{\sharp}) > \mathfrak{S}(p_{k+1}^{\sharp})(x- p_{k+1}^{\sharp})
\ee
for the first time, and this occurs when
\be 
\frac{\mathfrak{S}(p_{k+1}^{\sharp})}{\mathfrak{S}(p_{k}^{\sharp})} \sim \frac{x- p_{k}^{\sharp}}{x- p_{k+1}^{\sharp}}.
\ee
The left-hand side is
\be = 1+ \frac{1}{p_{k+1}-2} \sim 1+\frac{1}{\log x},\ee
while the right-hand side is
\be = 1 + \frac{p_{k+1}^{\sharp}-p_{k}^{\sharp}}{x-p_{k}^{\sharp}} \sim 1 + \frac{p_{k+1}^{\sharp}}{x}.\ee

Hence these two expressions match  when $ p_{k+1}^{\sharp} \sim \frac{x}{\log x}$ and we expect that the PDC will usually be $ \lfloor \frac{x}{\log x}\rfloor^\sharp$, where $\lfloor x\rfloor^\sharp$ is a floor function with respect to the primorials, defined by
\be 
\lfloor x\rfloor^\sharp = p_{k}^{\sharp} \quad \text{if} \quad  p_{k}^{\sharp}\le x < p_{k+1}^{\sharp}. 
\label{4.5}
\ee

The exception and most delicate situation is where the primorial $p_{k+1}^{\sharp}$ is very close to $\frac{x}{\log x}$, in which case either it or the previous primorial $p_{k}^{\sharp}$ may be the PDC. Here $p_{k}^{\sharp}$ will be very close to $\frac{x}{(\log x)^2}$. Notice the singular series has the same value at $p_{k}^{\sharp}$ and $2p_{k}^{\sharp}$, so in order to show this latter value is not the PDC we need to use the inequality $x- p_{k}^{\sharp} > x-2p_{k}^{\sharp}$ in Conjecture 1. These terms differ in Conjecture 1 in the  $\frac{x}{(\log x)^4}$ term, which we can only distinguish by taking $E(x,d) = o(\frac{x}{(\log x)^4})$ for  the error in Conjecture 1. 

The result we prove is the following.
\begin{theorem} Assume Conjecture 1.  Let $0<\delta \le \frac14$ be a given number. Then for $x$ sufficiently large, if the interval 
$[(1+\frac{\delta}{2}) \frac{x}{(\log x)^2}, (1-\frac{\delta}{2}) \frac{x}{\log x}]$ contains a primorial then that primorial is the PDC.  If this interval does not contain a primorial, then both of the intervals 
$[(1-\delta) \frac{x}{(\log x)^2}, (1+\frac{\delta}{2}) \frac{x}{(\log x)^2})$ and  $((1-\frac{\delta}{2}) \frac{x}{\log x}, (1+\delta) \frac{x}{\log x}]$ will contain primorials and one or the other or sometimes both will be the PDCs.
\end{theorem}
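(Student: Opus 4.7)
The plan is to exploit formula \eqref{G-formula}: since $H(x,d)=1+O(1/\log x)$ is essentially independent of $d$, for $2\le d\le \tfrac{8}{9}x$ the value of $G(x,d)$ is --- up to the common factor $1/(\log x)^2$ and relative error $o(1/(\log x)^2)$ --- determined by $F(d):=\mathfrak{S}(d)(x-d)$. The problem therefore reduces to maximizing $F(d)$ over even $d$.

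First I would show that the maximizer of $F$ must be a primorial. If $d$ is even with exactly $r$ distinct odd prime factors $q_1<\cdots<q_r$, then $\mathfrak{S}(d)=2C_2\prod_i(q_i-1)/(q_i-2)$ is decreasing in each $q_i$, so $\mathfrak{S}(d)\le\mathfrak{S}(p_{r+1}^{\sharp})$ with equality iff $q_i=p_{i+1}$, while $d\ge 2q_1\cdots q_r\ge p_{r+1}^{\sharp}$ with equality iff $d=p_{r+1}^{\sharp}$. Combining, $F(d)\le F(p_{r+1}^{\sharp})$ with equality iff $d=p_{r+1}^{\sharp}$. Among primorials, the ratio
\[
\frac{F(p_{k+1}^{\sharp})}{F(p_k^{\sharp})}=\left(1+\frac{1}{p_{k+1}-2}\right)\cdot\frac{x-p_{k+1}^{\sharp}}{x-p_k^{\sharp}}
\]
exceeds $1$ when $p_{k+1}^{\sharp}$ is substantially below $x/\log x$ and is less than $1$ once $p_k^{\sharp}$ is substantially above $x/\log x$, using \eqref{eq34} (which gives $p_{k+1}\sim\log x$ near the transition). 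A direct calculation with these thresholds turns ``$F(p_k^{\sharp})$ beats both neighbors by a relative margin $\gg 1/\log x$'' into the containment $p_k^{\sharp}\in[(1+\delta/2)x/(\log x)^2,(1-\delta/2)x/\log x]$ of the main clause.

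For $d\in(\tfrac{8}{9}x,x)$, outside the range of Conjecture 1, I would combine the unconditional Selberg/Brun upper bound sieve $G(x,d)\ll\mathfrak{S}(d)(x-d)/(\log(x-d))^2$ with the trivial estimate $G(x,d)\le\pi(x-d)$ for $d$ extremely close to $x$. Since $\mathfrak{S}(d)\ll\log\log x$ and $x-d\le x/9$, these bounds fall strictly below $G(x,p_{k^*}^{\sharp})$ (which is of order $\log\log x\cdot x/(\log x)^2$) by a fixed constant factor, eliminating these $d$ from contention.

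The subtlest step --- and the main obstacle --- is distinguishing $p_k^{\sharp}$ from $2p_k^{\sharp}$, or more generally from any multiple $mp_k^{\sharp}$ sharing the same squarefree odd part. Because $\mathfrak{S}(2p_k^{\sharp})=\mathfrak{S}(p_k^{\sharp})$, the $F$-gap is exactly $\mathfrak{S}(p_k^{\sharp})\cdot p_k^{\sharp}$, of order $\log\log x\cdot x/(\log x)^2$ in the safe zone $p_k^{\sharp}\ge(1+\delta/2)x/(\log x)^2$; dividing by $(\log x)^2$ via \eqref{G-formula} gives a $G$-gap of order $\log\log x\cdot x/(\log x)^4$, which dominates the conjectured error only because Conjecture 1 provides $o(x/(\log x)^4)$ and not merely $O(x/(\log x)^4)$. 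The boundary clause (no primorial in the safe zone) then follows because consecutive primorials differ by the factor $p_{k+1}\sim\log x$, placing at most one primorial in each of the two side intervals; the $F$-comparison between these two settles whether one, the other, or both are PDCs, with near-equality of $F$-values accounting for simultaneous PDCs.
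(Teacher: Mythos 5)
Your proposal is correct and follows essentially the same route as the paper: both rest on \eqref{G-formula}, on the monotonicity of $\mathfrak{S}(d)$ along the primorials played against the linear factor $(x-d)$, on the sieve upper bound plus the trivial bound to dispose of $d>\frac{8}{9}x$, and on the full strength $E(x,d)=o\!\left(\frac{x}{(\log x)^4}\right)$ of Conjecture 1 precisely to separate $p_k^{\sharp}$ from $2p_k^{\sharp}$. The only organizational difference is that you first dominate every even $d$ by the primorial $p_{r+1}^{\sharp}\le d$ determined by its number of distinct prime factors and then study unimodality along the primorials, whereas the paper proves two lemmas (Lemma 1, and Lemma 2 in five cases) comparing each $d$ directly to the candidate champion; the underlying inequalities are identical.
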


\begin{corollary} Assume Conjecture 1. Then all sufficiently large PDC's are primorials, and every sufficiently large primorial will be the PDC for some $x$. \end{corollary}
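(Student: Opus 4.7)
My plan is to derive the Corollary from Theorem 1: its first assertion is essentially immediate, while the second requires me to choose $x$ suitably so that a given primorial falls inside the ``wide'' interval
$A(x) := [(1+\tfrac{\delta}{2})\tfrac{x}{(\log x)^2},\,(1-\tfrac{\delta}{2})\tfrac{x}{\log x}]$
of Theorem 1. Since Theorem 1 is the substantive content, I sketch its proof first and then assemble the Corollary.

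To prove Theorem 1, I would start from the asymptotic formula
\[ G(x,d) = \frac{\mathfrak{S}(d)(x-d)}{(\log x)^2}\,H(x,d)\bigl(1+o(1/(\log x)^2)\bigr) \]
derived in Section 5, and proceed in two stages. Stage one eliminates non-primorials: if $d$ is even with $r$ distinct odd prime factors, then $d\ge p_{r+1}^{\sharp}$ and $\mathfrak{S}(d)\le\mathfrak{S}(p_{r+1}^{\sharp})$, with joint equality iff $d = p_{r+1}^{\sharp}$, so $\mathfrak{S}(d)(x-d)\le\mathfrak{S}(p_{r+1}^{\sharp})(x-p_{r+1}^{\sharp})$ with a quantifiable gap when $d$ is not a primorial. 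The tightest subcase is $d = 2p_{r+1}^{\sharp}$ with $p_{r+1}^{\sharp}\sim x/(\log x)^2$: here the gap is $\mathfrak{S}(p_{r+1}^{\sharp})\,p_{r+1}^{\sharp}/(\log x)^2 \asymp \log\log x\cdot x/(\log x)^4$ by Mertens' theorem, which just dominates the $o(x/(\log x)^4)$ error --- precisely the reason Conjecture 1 demands this level of precision.

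Stage two compares primorials via $F(k):=\mathfrak{S}(p_k^{\sharp})(x-p_k^{\sharp})$. The ratio
\[ F(k+1)/F(k) = \bigl(1+1/(p_{k+1}-2)\bigr)(x-p_{k+1}^{\sharp})/(x-p_k^{\sharp}) \]
crosses $1$ near $p_{k+1}^{\sharp}\sim x/\log x$. If a primorial $p_k^{\sharp}$ lies in $A$, then using $p_{k+1}\sim\log x$ (from $\log p_k^{\sharp}\sim p_k$ by the Prime Number Theorem) I would verify that both $F(k)-F(k-1)$ and $F(k)-F(k+1)$ are positive by margins comfortably dominating the error, making $p_k^{\sharp}$ the PDC. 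If no primorial lies in $A$, let $p_k^{\sharp}$ be the largest primorial $\le x/\log x$; the factor-$\log x$ spacing of primorials and a short case analysis show that either $(p_{k-1}^{\sharp},p_k^{\sharp})$ or $(p_k^{\sharp},p_{k+1}^{\sharp})$ lies in $B\times C$, and these are the only two viable candidates. Their $F$-values differ by $O(x/(\log x)^3)$ times quantities comparable to the error, so either or both may be the PDC --- this is the main obstacle in the whole argument, and the strength of Conjecture 1's error term is crucial to resolving it.

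The Corollary now follows. The first assertion is immediate, since Theorem 1 confirms the PDC is always a primorial for $x$ large. For the second, given a large primorial $p_k^{\sharp}$ I seek a prime $x$ such that $p_k^{\sharp}\in A(x)$, equivalently
\[ \frac{p_k^{\sharp}\log x}{1-\delta/2} \;\le\; x \;\le\; \frac{p_k^{\sharp}(\log x)^2}{1+\delta/2}. \]
Substituting $\log x\sim p_k$ (valid since we need $x\asymp p_k p_k^{\sharp}$), the right endpoint exceeds the left by a factor $\gg p_k$, so the admissible range for $x$ has width $\gg p_k^{2}p_k^{\sharp}$ and contains a prime for $k$ large by the Prime Number Theorem. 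At any such prime $x$, Theorem 1 forces $p_k^{\sharp}$ to be the PDC, completing the proof.
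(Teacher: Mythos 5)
Your derivation of the Corollary from Theorem 1 is correct and is exactly the route the paper intends: the paper states the Corollary without a separate proof, the first assertion being immediate from Theorem 1, and the second following because for any large primorial $p_k^{\sharp}$ the set of $x$ with $(1+\tfrac{\delta}{2})\tfrac{x}{(\log x)^2}\le p_k^{\sharp}\le (1-\tfrac{\delta}{2})\tfrac{x}{\log x}$ is a nonempty (indeed very long) interval, on which Theorem 1 forces $p_k^{\sharp}$ to be the PDC. Your embedded sketch of Theorem 1 also tracks the paper's actual argument (monotonicity of $\mathfrak{S}$ on the primorials, the crossover of $\mathfrak{S}(p_k^{\sharp})(x-p_k^{\sharp})$ near $p_{k+1}^{\sharp}\sim x/\log x$, and the delicate $d=2p_b^{\sharp}$ comparison that necessitates the $o\bigl(x/(\log x)^4\bigr)$ error term), so no further comment is needed.
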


Before proceeding to the proof of Theorem 1 it is instructive to refer again to the numerical calculations presented in Section \ref{S_Num}.  
The dot-dashed line in Figure \ref{Fig_PDC} is a logarithmic plot of $x/\log(x)$ and represents an asymptotic upper bound on the PDC's in accordance with Theorem 1.  The dashed line is a logarithmic plot of $x/\log^2(x)$ and represents the corresponding lower bound.  The analytical bounds are in excellent agreement with the numerical data.  
Furthermore, in accordance with Corollary 1, we find that the PDC's in the observed range are all primorials for $x\ge19$.  
Note also that the upper bound on the PDC's is essentially the prime counting function, which explains the observed tendency for the steps in the PDC's to occur where $x=p_n$ is roughly equal to the next largest primorial.   

\section{Proof of Theorem 1 \label{S_ProofOfTheorem1}} 
Theorem 1 follows from the following two lemmas.   
First it is convenient to define a ceiling function with respect to the sequence of primorials, analogously to 
$\lfloor x \rfloor^{\sharp}$, such that  
\be
\label{ceiling} 
\lceil x\rceil^\sharp = p_{k}^{\sharp} \quad \text{if} \quad p_{k-1}^{\sharp}< x\le  p_{k}^{\sharp}. 
\ee

\begin{lemma} Assume Conjecture 1, and let  $0<\delta \le \frac14$ be a given number. Define
\be  p_{a}^{\sharp} := \left\lfloor (1 - \frac{\delta}{2} ) \frac{x}{\log x}\right\rfloor^\sharp .
\ee
Then, for $x$ sufficiently large  and $2\le d < p_{a}^{\sharp}$,  we have $G(x,d)<G(x,p_{a}^{\sharp})$.  
\end{lemma}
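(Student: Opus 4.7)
The plan is to invoke formula \eqref{G-formula} and reduce the inequality $G(x,d) < G(x, p_a^\sharp)$ to a comparison of the dominant factors $\mathfrak{S}(\cdot)(x-\cdot)H(x,\cdot)$, with enough slack to absorb the $o(1/(\log x)^2)$ error. Odd $d\ge 3$ are trivial, because such a difference forces one of the two primes to equal $2$, giving $G(x,d)\le 1 \ll x/(\log x)^2 \asymp G(x,p_a^\sharp)$. So I focus on even $d$ with $2\le d<p_a^\sharp$, which safely lies in the Conjecture~1 range $d\le \tfrac{8}{9}x$.

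The main input is a bound on the singular series. Since the smallest positive integer with exactly $\omega$ distinct prime factors is the primorial $p_\omega^\sharp$, the hypothesis $d<p_a^\sharp$ forces $\omega(d)\le a-1$. Combined with \eqref{singprod}, whose product is maximized when the prime factors of $d$ are as small as possible, this yields
\[
\mathfrak{S}(d) \;\le\; \mathfrak{S}(p_{a-1}^\sharp) \;=\; \frac{\mathfrak{S}(p_a^\sharp)}{\,1 + 1/(p_a - 2)\,}.
\]
Next, $p_a^\sharp \le (1-\delta/2)x/\log x$ bounds the linear factor by
\[
\frac{x-d}{x-p_a^\sharp} \;\le\; 1 + \frac{p_a^\sharp}{x-p_a^\sharp} \;\le\; 1 + \frac{1-\delta/2}{\log x} + O\!\left(\frac{1}{(\log x)^2}\right),
\]
while $\theta(p_a)=\log p_a^\sharp$ together with \eqref{eq34} and the two-sided bound $p_a^\sharp\asymp x/\log x$ gives $p_a=(\log x)(1+o(1))$, so $1/(p_a-2)=1/\log x + o(1/\log x)$. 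Since both $d$ and $p_a^\sharp$ are $O(x/\log x)$, the $H$-ratio contributes only $H(x,p_a^\sharp)/H(x,d) = 1 + O(1/(\log x)^2)$. Assembling these into \eqref{G-formula},
\[
\frac{G(x,p_a^\sharp)}{G(x,d)} \;\ge\; \frac{1 + 1/\log x + o(1/\log x)}{1 + (1-\delta/2)/\log x + O(1/(\log x)^2)} \;=\; 1 + \frac{\delta/2}{\log x} + o\!\left(\frac{1}{\log x}\right),
\]
which exceeds $1$ for $x$ sufficiently large.

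The main obstacle is the delicate bookkeeping: the net gain over all $d<p_a^\sharp$ is only the second-order quantity $\delta/(2\log x)$, so every intermediate error---the $H$-ratio, the $o(1/(\log x)^2)$ of Conjecture~1, and the $o$-term in $p_a\sim\log x$---must be verified to be strictly smaller than this margin. The role of the hypothesis $\delta>0$ is precisely to create a strictly positive gap; the boundary case $\delta=0$ would close the margin and corresponds to the critical transition region that Lemma~2 will treat using the finer $o(x/(\log x)^4)$ strength of Conjecture~1.
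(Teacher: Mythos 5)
Your proposal is correct and follows essentially the same route as the paper's proof: the same ratio decomposition via \eqref{G-formula} into the singular-series factor (using $\mathfrak{S}(d)\le\mathfrak{S}(p_{a-1}^\sharp)$ for $d<p_a^\sharp$), the linear factor, and the $H$-ratio, yielding the same net margin $1+\frac{\delta/2}{\log x}+o(\frac{1}{\log x})$. The only addition is your explicit (and harmless) disposal of odd $d$, which the paper leaves implicit.
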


\begin{lemma} Assume Conjecture 1, and let  $0<\delta \le \frac14$ be a given number. 
Define
\be  p_{b}^{\sharp} := \left\lceil (1 + \frac{\delta}{2} ) \frac{x}{(\log x)^2}\right\rceil^\sharp .
\ee
Then, for $x$ sufficiently large  and $x\ge d>  p_{b}^{\sharp}$,  we have $G(x,p_{b}^{\sharp})>G(x,d)$.  \end{lemma}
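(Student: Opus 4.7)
The proof plan for Lemma 2 parallels that of Lemma 1. Writing $p_k^\sharp := p_b^\sharp$, the definition together with \eqref{eq34} gives $p_k = (1+o(1))\log x$, whence $p_{k+1}^\sharp = p_{k+1}\,p_k^\sharp \ge (1+\delta/2+o(1))\,x/\log x$ and $p_{k+2}^\sharp \ge (1+\delta/2+o(1))\,x > x$ for large $x$. Hence the only primorial possibly lying in $(p_k^\sharp,x]$ is $p_{k+1}^\sharp$, and a single nontrivial primorial comparison will suffice.

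The first step is to reduce to primorials. For odd $d$, $\mathfrak{S}(d)=0$ makes $G(x,d)$ negligible. For even $d$ with $r$ distinct odd prime divisors $q_1<\cdots<q_r$, the inequalities $q_i \ge p_{i+1}$ yield $d \ge p_{r+1}^\sharp$ and $\mathfrak{S}(d) \le \mathfrak{S}(p_{r+1}^\sharp)$, so
\be
\mathfrak{S}(d)(x-d) \le \mathfrak{S}(p_{r+1}^\sharp)(x-p_{r+1}^\sharp).
\ee
Since $p_{k+2}^\sharp > x$ forces $r+1 \le k+1$, only two situations demand careful treatment via \eqref{G-formula}: (ii) $r+1 = k+1$, whose worst instance is $d = p_{k+1}^\sharp$, and (i) the \emph{companion} case in which $d > p_k^\sharp$ shares the odd-prime divisors of $p_k^\sharp$. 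The tail range $d \in (8x/9, x]$, outside the validity of Conjecture~1, is disposed of by a standard Brun-type upper bound $G(x,d) \ll \mathfrak{S}(d)(x-d)/(\log x)^2$ together with $x-d \le x/9$ and $\mathfrak{S}(d) \le \mathfrak{S}(p_{k+1}^\sharp) = \mathfrak{S}(p_k^\sharp)(1+o(1))$.

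For case (ii) I would form the ratio appearing in \eqref{G-formula}:
\be
\frac{\mathfrak{S}(p_{k+1}^\sharp)(x-p_{k+1}^\sharp)H(x,p_{k+1}^\sharp)}{\mathfrak{S}(p_k^\sharp)(x-p_k^\sharp)H(x,p_k^\sharp)} = \left(1+\frac{1+o(1)}{\log x}\right)\left(1-\frac{1+\delta/2+o(1)}{\log x}\right)\left(1+O\left(\frac{1}{(\log x)^2}\right)\right),
\ee
where the $H$-factor bound uses $H(x,p_{k+1}^\sharp) - H(x,p_k^\sharp) = O(p_{k+1}^\sharp/(x\log x)) = O(1/(\log x)^2)$. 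This product equals $1 - (\delta/2 + o(1))/\log x$, giving a relative margin of order $1/\log x$ that comfortably dominates the $o(1/(\log x)^2)$ relative error in \eqref{G-formula}.

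The main obstacle is case (i). Any such $d$ is a multiple of $p_k^\sharp$ with identical odd-prime support, hence $d \ge 2p_k^\sharp$ and $\mathfrak{S}(d) = \mathfrak{S}(p_k^\sharp)$. Using $H(x,p_k^\sharp) - H(x,2p_k^\sharp) = O(1/(\log x)^3)$, one obtains
\be
G(x,p_k^\sharp) - G(x,d) \ge \frac{\mathfrak{S}(p_k^\sharp)\,p_k^\sharp}{(\log x)^2}(1+o(1)) - o\left(\frac{x}{(\log x)^4}\right) \sim \frac{x\log\log x}{(\log x)^4},
\ee
since $\mathfrak{S}(p_k^\sharp) \sim C\log\log x$ by Mertens' theorem and $p_k^\sharp \ge (1+\delta/2)x/(\log x)^2$. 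The margin survives the $o(x/(\log x)^4)$ error from Conjecture~1 precisely because $\mathfrak{S}(p_k^\sharp) \to \infty$; the sharpness $E(x,d) = o(x/(\log x)^4)$ is therefore indispensable, as foreshadowed in Section~\ref{S_Sketch}. Any variant of case (ii) whose odd-prime profile differs from that of $p_{k+1}^\sharp$ enjoys additional strict gains in both $\mathfrak{S}$ and $(x-d)$, and is handled by the same argument.
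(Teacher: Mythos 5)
Your overall skeleton is the right one and broadly matches the paper's: compare $G(x,d)$ with $G(x,p_{b}^{\sharp})$ through the product $\mathfrak{S}(d)(x-d)H(x,d)$, exploit the monotonicity of the singular series on the primorials, and isolate the two genuinely delicate comparisons --- the multiples of $p_{b}^{\sharp}$, where $\mathfrak{S}$ gives no gain and only the linear factor $x-d$, of relative size $\asymp 1/(\log x)^2$, separates the two counts (which is exactly why $E(x,d)=o(x/(\log x)^4)$ is indispensable), and the single larger primorial $p_{b+1}^{\sharp}$, where the loss of $1/\log x$ in $\mathfrak{S}$ is beaten by the gain of at least $(1+\delta/2)/\log x$ in the linear factor. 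Those two computations are essentially the paper's Cases 2 and 3 and are correct.

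However, there are two genuine gaps. First, the reduction step is logically flawed: replacing $d$ by $p_{r+1}^{\sharp}$ via $\mathfrak{S}(d)(x-d)\le \mathfrak{S}(p_{r+1}^{\sharp})(x-p_{r+1}^{\sharp})$ discards the hypothesis $d>p_{b}^{\sharp}$, and the resulting upper bound need not lie below the target $\mathfrak{S}(p_{b}^{\sharp})(x-p_{b}^{\sharp})$. Concretely, when the critical interval contains no primorial one can have $p_{b}^{\sharp}=(1+\frac{\delta}{2}+o(1))\frac{x}{\log x}$; then the step from $p_{b-1}^{\sharp}$ to $p_{b}^{\sharp}$ gains only $1+\frac{1+o(1)}{\log x}$ in $\mathfrak{S}$ while losing $1-\frac{1+\delta/2+o(1)}{\log x}$ linearly, so $\mathfrak{S}(p_{b-1}^{\sharp})(x-p_{b-1}^{\sharp})>\mathfrak{S}(p_{b}^{\sharp})(x-p_{b}^{\sharp})$. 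Taking $d=2^{m-1}p_{b-1}^{\sharp}\in(p_{b}^{\sharp},2p_{b}^{\sharp})$ (which has $r=b-2$ odd prime factors), your bound then exceeds the target and proves nothing; likewise $d$ with $b-1$ odd prime factors not all among $3,\dots,p_b$ yields zero margin under the reduction yet is not in your companion case. The cure is the paper's: keep $d$ itself, use $x-d<x-p_{b}^{\sharp}$ directly, and organize the cases by the \emph{size} of $d$ (which is what controls the $H$-ratio and linear-factor error terms), not by its number of prime factors. Second, the tail is mishandled: the bound $G(x,d)\ll \mathfrak{S}(d)(x-d)/(\log x)^2$ is false uniformly up to $d=x$ (for $d=x-3$ with $3$ and $x$ prime one has $G(x,d)\ge 1$ while the right side tends to $0$; the sieve really gives $(\log(x-d))^2$ in the denominator), so the range $x-\frac{x}{(\log x)^3}\le d\le x$ requires a separate trivial count of primes in a short interval. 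Moreover an unspecified implied constant cannot close the argument: one needs the explicit sieve constant $8$ of Halberstam--Richert so that $8\cdot\frac19<1$; the threshold $\frac89 x$ in Conjecture 1 is calibrated to precisely this, and writing $\ll$ loses the game.
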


\begin{proof}[Proof of Theorem 1]  There can be at most one primorial in the interval $[(1+\frac{\delta}{2}) \frac{x}{(\log x)^2}, (1-\frac{\delta}{2}) \frac{x}{\log x}]$ since if $p_{k}^{\sharp} = x^{1+o(1)}$ we have $p_{k+1}^{\sharp} \sim p_{k}^{\sharp}\log x$  and $\log x$ times the left endpoint of this interval is larger than the right endpoint. If there is a primorial in this interval then clearly it is $p_{a}^{\sharp}= p_{b}^{\sharp}$, and by Lemmas 1 and 2 it is the PDC.  

Next  both of the intervals 
$[(1-\delta) \frac{x}{(\log x)^2}, (1-\frac{\delta}{2}) \frac{x}{\log x})$ and  $((1+\frac{\delta}{2}) \frac{x}{(\log x)^2}, (1+\delta) \frac{x}{\log x}]$ will contain at least one primorial, but if  $[(1+\frac{\delta}{2}) \frac{x}{(\log x)^2}, (1-\frac{\delta}{2}) \frac{x}{\log x}]$ does not contain a primorial then both of the intervals 
$[(1-\delta) \frac{x}{(\log x)^2}, (1+\frac{\delta}{2}) \frac{x}{(\log x)^2})$ and  $((1-\frac{\delta}{2}) \frac{x}{\log x}, (1+\delta) \frac{x}{\log x}]$ must contain consecutive primorials,  say $p_{j}^{\sharp}$ and $p_{j+1}^{\sharp}$, and $p_{j}^{\sharp} = p_{a}^{\sharp}$ and $p_{j+1}^{\sharp}=p_{b}^{\sharp}$. At least one of these must be the PDC, and since, for fixed $d$,  $N(x,d)$ increases by steps of 1 as $x$ increases, the PDC will transition from  $p_{a}^{\sharp}$ to  $p_{b}^{\sharp}$ and there must be at least one value of $x$ where $N(x,p_{a}^{\sharp} ) = N(x,p_{b}^{\sharp})$ and the PDC is $\{ p_{a}^{\sharp},p_{b}^{\sharp}\}$.

\end{proof}

In proving Lemma 1 and Lemma 2 we use \eqref{G-formula} to examine the ratio
\be 
\label{ratio}  \frac{ G(x,p_{k}^{\sharp})}{ G(x,d)} =\frac{ \mathfrak{S}(p_{k}^{\sharp})}{ \mathfrak{S}(d)} \frac{x-p_{k}^{\sharp}}{x-d} \frac{ H(x,p_{k}^{\sharp})}{ H(x,d)}\left(1+o(\frac{1}{(\log x)^2})\right). 
\ee
For $2\le d<  p_{k}^{\sharp}$ we have $\mathfrak{S}( p_{k-1}^{\sharp}) \ge \mathfrak{S}( d)$ and therefore
\be 
\label{singratio} \frac{ \mathfrak{S}(p_{k}^{\sharp})}{ \mathfrak{S}(d)}= \left(1 + \frac{1}{p_k-2}\right)\frac{ \mathfrak{S}(p_{k-1}^{\sharp})}{ \mathfrak{S}(d)}\ge 1 + \frac{1}{p_k-2} . 
\ee
Also,  for $2\le d_1,d_2\le\frac89  x$ 
\be
\label{linearratio}\begin{split} \frac{x-d_2}{x-d_1}&= \frac{1-\frac{d_2}{x}}{1-\frac{d_1}{x}} = \left({1-\frac{d_2}{x}}\right) \left(1+\frac{d_1}{x} +\left( \frac{d_1}{x}\right)^2 +\left(\frac{d_1}{x}\right)^3 + \cdots  \right) \\& \ge
 \left({1-\frac{d_2}{x}}\right) \left(1+\frac{d_1}{x} \right) \\&
= 1+\frac{d_1-d_2}{x} -\frac{d_1d_2}{x^2},\end{split}
\ee
and 
\be
\label{Hratio} \frac{ H(x,d_2)}{ H(x,d_1)} =1+O(\frac{d_1+d_2}{x\log x})+O(\frac{1}{(\log x)^3}).
\ee

\begin{proof}[Proof of Lemma 1]
In Lemma 1  we have $2\le d < p_{a}^{\sharp}\le (1-\frac{\delta}{2})\frac{x}{\log x}$ and  $\mathfrak{S}( p_{a-1}^{\sharp}) \ge \mathfrak{S}( d)$.  Therefore by \eqref{singratio} we have
\be 
\frac{ \mathfrak{S}(p_{a}^{\sharp})}{ \mathfrak{S}(d)}\ge 1 + \frac{1}{p_a-2} =1 +\frac{1}{\log x}(1+o(1));
\ee
by \eqref{linearratio} we have 
\be  
\frac{x-p_{a}^{\sharp}}{x-d}= 1-\frac{p_{a}^{\sharp}}{x}+\frac{d}{x}+O(\frac{1}{(\log x)^2}) \ge 1 - \frac{1-\frac{\delta}{2}}{\log x}  +O(\frac{1}{(\log x)^2}),
\ee
and by \eqref{Hratio} we have
\be 
\frac{ H(x,p_{a}^{\sharp})}{ H(x,d)} =1+O(\frac{1}{(\log x)^2}).
\ee
Therefore by \eqref{ratio} we have
\be 
\begin{split}  \frac{ G(x,p_{a}^{\sharp})}{ G(x,d)} &\ge\left( 1 +\frac{1}{\log x}(1+o(1))\right) \left(  1 - \frac{1-\frac{\delta}{2}}{\log x} +O(\frac{1}{(\log x)^2})\right)\left(1+O(\frac{1}{(\log x)^2})\right)\left(1+o(\frac{1}{(\log x)^2})\right) \\ &
\ge 1 +\frac{ \frac{\delta}{2}}{\log x} -o(\frac{1}{\log x}) \\ &
>  1 +\frac{\frac{\delta}{4}}{\log x} \\ & >1,
\end{split} 
\ee
for all sufficiently large $x$.  This proves Lemma 1. 
\end{proof}

\begin{proof}[Proof of Lemma 2] We now assume $d >  p_{b}^{\sharp}$.  Here $ (1 + \frac{\delta}{2} ) \frac{x}{(\log x)^2}\le p_{b}^{\sharp} \le   (1 + \delta ) \frac{x}{\log x}$ for $x$ sufficiently large. We need to divide the proof of Lemma 2 into cases depending on the size of $d$.  

\medskip
\emph{Case 1.} Suppose $ p_{b}^{\sharp}< d<2 p_{b}^{\sharp}$. In this range 
$ \mathfrak{S}( p_{b-1}^{\sharp}) \ge \mathfrak{S}(d) $ so that  just as before in \eqref{singratio} we have 
\be  
\frac{ \mathfrak{S}(p_{b}^{\sharp})}{ \mathfrak{S}(d)}\ge 1 + \frac{1}{p_b-2} =1 +\frac{1}{\log x}(1+o(1)).
\ee
By \eqref{linearratio} 
\be
\frac{x-p_{b}^{\sharp}}{x-d}= 1-\frac{p_{b}^{\sharp}}{x}+\frac{d}{x}+O\left(\frac{(p_{b}^{\sharp})^2}{x^2}\right) \ge 1  +O(\frac{1}{(\log x)^2}),
\ee
and by \eqref{Hratio}
\be 
\frac{ H(x,p_{b}^{\sharp})}{ H(x,d)} =1+O(\frac{p_{b}^{\sharp}}{x\log x})+  O(\frac{1}{(\log x)^3})=1+O(\frac{1}{(\log x)^2}).
\ee
Hence by \eqref{ratio}
\be 
\begin{split} \frac{ G(x,p_{b}^{\sharp})}{ G(x,d)}& \ge \left(1 +\frac{1}{\log x}(1+o(1))\right)\left(1+O(\frac{1}{(\log x)^2})\right)^2\left(1+o(\frac{1}{(\log x)^2})\right)\\&
\ge 1 + \frac{\frac12}{\log x} \\& >1 \end{split}
\ee
for all sufficiently large $x$, which proves Lemma 2 in this range.

\medskip
\emph{Case 2.}  Suppose $2 p_{b}^{\sharp}\le d< \min( p_{b+1}^{\sharp}, \frac89 x)$. Since $d< p_{b+1}^{\sharp}$,   we have $ \mathfrak{S}( p_{b}^{\sharp}) \ge \mathfrak{S}(d) $, and thus
\be
\frac{ \mathfrak{S}(p_{b}^{\sharp})}{ \mathfrak{S}(d)}\ge 1 .
\ee
Now $d\le \frac89 x$ is needed to stay in the range where \eqref{G-formula}, \eqref{linearratio}, and \eqref{Hratio} are valid. 
Using the inequality  $d- p_{b}^{\sharp} \ge \frac{d}{2}$ which is valid when $2 p_{b}^{\sharp}\le d$, we have by \eqref{linearratio} for $d\le \frac89 x$, 
\be 
\begin{split}  \frac{x-p_{b}^{\sharp}}{x-d}&\ge  1  +\frac{d-p_{b}^{\sharp}}{x}  -\frac{dp_{b}^{\sharp}}{x^2} \\& \ge 1 + \frac{d}{2x}  -\frac{d(1+\delta)}{x\log x} \\& > 1 + \frac{d}{3x} ,\end{split}
\ee
and by \eqref{Hratio} for $d\le \frac89 x$, 
\be 
\frac{ H(x,p_{b}^{\sharp})}{ H(x,d)} =1+O(\frac{d}{x\log x})+  O(\frac{1}{(\log x)^3}) = 1+O(\frac{d}{x\log x}).
\ee
Hence by \eqref{ratio}
\be 
\begin{split} \frac{ G(x,p_{b}^{\sharp})}{ G(x,d)}& \ge \left(1 + \frac{d}{3x} \right)\left(1+O(\frac{d}{x\log x})\right)\left( 1+o(\frac{1}{(\log x)^2})\right) \\&
> 1 +\frac{d}{4x}\\& >1 
\end{split}
\ee
for all sufficiently large $x$, which proves Lemma 2 in this range. Notice we needed $E(x,d) =o(\frac{x}{(\log x)^4})$ in Conjecture 1 for this last step.

\medskip
\emph{Case 3.}  Suppose $  p_{b+1}^{\sharp} \le d< \frac89 x$.  We see that $ p_{b+2 }^{\sharp}\sim (\log x)  p_{b+1}^{\sharp}\sim (\log x)^2  p_{b}^{\sharp} > (1+\frac{\delta}{3}) x $ for $x$ sufficiently large, and hence  $ p_{b+2 }^{\sharp}$ is larger than the range of $d$ here. Hence in this range  $  \mathfrak{S}(p_{b+1}^{\sharp}) \ge  \mathfrak{S}(d)$. Further we have shown  $p_{b+1}^{\sharp} > \frac{x}{\log x}$ and therefore $\frac{x}{\log x} \le d< \frac89 x$.
Hence
\be 
\begin{split} \frac{ \mathfrak{S}(p_{b}^{\sharp})}{ \mathfrak{S}(d)} &=  \frac{ \mathfrak{S}(p_{b+1}^{\sharp})}{ \mathfrak{S}(d)}\left(1+\frac{1}{ p_{b+1}-2}\right)^{-1} \\&
\ge \left( 1 + \frac{1}{\log x}(1+o(1))\right)^{-1} \\&
\ge  1 - \frac{1}{\log x}(1+o(1)).
\end{split}
\ee
By \eqref{linearratio}, and using $p_{b}^{\sharp} \ll \frac{d}{\log x}$ in this range, 
\be 
\begin{split}  \frac{x-p_{b}^{\sharp}}{x-d}&\ge  1  +\frac{d-p_{b}^{\sharp}}{x}  -\frac{dp_{b}^{\sharp}}{x^2} \\& \ge 1 +\frac{d}{x}  - \left(\frac{d}{x}+1\right)\frac{p_{b}^{\sharp}}{x}\\&
\ge 1 +\frac{d}{x}  - \frac{2p_{b}^{\sharp}}{x}\\&
\ge 1 +\frac{d}{x}  + O( \frac{d}{x\log x})\\&
\ge 1 +\frac{d}{x} (1+o(1))
 .\end{split}
 \ee
By \eqref{Hratio} for $\frac{x}{\log x} \le d< \frac89 x$, 
\be 
\frac{ H(x,p_{b}^{\sharp})}{ H(x,d)} =1+O(\frac{d}{x\log x})+ O(\frac{1}{(\log x)^3}) = 1 + o(\frac{d}{x}),
\ee
and hence by \eqref{ratio}
\be 
\begin{split} \frac{ G(x,p_{b}^{\sharp})}{ G(x,d)}& > \left(  1 - \frac{1}{\log x}(1+o(1)) \right)\left( 1 +\frac{d}{x} (1+o(1))\right)\left(1 + o(\frac{d}{x})\right)\left(1+o(\frac{1}{(\log x)^2})\right)\\&
= 1 +\left(\frac{d}{x} - \frac{1}{\log x}\right)(1+o(1))  
\end{split}
\ee
Since here  
\be
\begin{split}\frac{d}{x} &\ge \frac{ p_{b+1}^{\sharp}}{x} = \frac{ p_{b}^{\sharp}\log x(1+o(1))}{x}\\&\ge \frac{ (1+\frac{\delta}{2})\frac{x}{(\log x)^2} \log x (1+o(1))}{x} \\& >\frac{ 1 +\frac{\delta}{3}}{\log x}\end{split}
\ee
for all sufficiently large $x$, we conclude
\be 
\frac{ G(x,p_{b}^{\sharp})}{ G(x,d)} > 1 +\frac{ \frac{\delta}{4}}{\log x} >1
\ee
for all sufficiently large $x$, which proves Lemma 2 in this range. 

\medskip
\emph{Case 4.}  Suppose $ \frac89 x\le d\le x-\frac{x}{(\log x)^3} $.   We use the sieve upper bound, for $1<y\le x$ 
\be 
G(x,d) -G(x-y,d) \le  8 \mathfrak{S}(d) \frac{y}{(\log y)^2}\left( 1 +O(\frac{\log\log3 y}{\log y}\right),
\ee
see Theorem 5.3 or Corollary 5.8.2 of \cite{HalberstamRichert1974}.
Since trivially $G(d,d)=0$, we see that $G(x,d) = G(x,d) - G(d,d) =G(x,d) - G(x-(x-d),d)$, and therefore taking $y =x-d\le \frac{x}{9}$ in the sieve estimate above for $ \frac89 x\le d\le x-\frac{x}{(\log x)^3}$, we have
\be 
\begin{split} G(x,d) & \le  8  \mathfrak{S}(d) \frac{x-d}{(\log (x-d))^2}\left( 1 +O(\frac{\log\log3 (x-d)}{\log (x-d)}\right)   \\& \le  \frac89 \mathfrak{S}(p_{b+1}^{\sharp})\frac{x}{(\log x)^2}(1+o(1)) \\&
< \frac9{10}\mathfrak{S}(p_{b}^{\sharp})\frac{x}{(\log x)^2}.
\end{split}
\ee
On the other hand, by Conjecture 1 in the simple form in \eqref{HLpairs}
\be 
G(x, p_{b}^{\sharp}) \sim \mathfrak{S}(p_{b}^{\sharp})\frac{x}{(\log x)^2}, 
\ee
and thus $ G(x, p_{b}^{\sharp}) > G(x,d)$.

\medskip
\emph{Case 5.} Suppose $  x-\frac{x}{(\log x)^3} \le d \le x $.  Then if $p'-p =d$ then $p' \ge p  + x-\frac{x}{(\log x)^3} > 
 x-\frac{x}{(\log x)^3}$.
Then 
\be
G(x,d)  =  \sum_{\substack{p,p' \leq x \\ p' - p = d}} 1 \ \le \sum_{  x-\frac{x}{(\log x)^3}<p'\le x}1 \ \le \frac{x}{(\log x)^3}
\ee
and Lemma 2 follows since this is smaller than the size of $G(x, p_{b}^{\sharp})$ we just used in Case 4. 

This completes the proof of Lemma 2. 

\end{proof}

\section{ Logarithmically Weighted Sums and Products of Primes \label{S_LogSums}}
Starting with \eqref{singprod}, we take logarithms and obtain
\be\label{5.2}
\mathfrak{S}(d)= 2C_2\exp\left(\sum_{\substack{p \mid d \\ p > 2}}\log\left(1+ \frac{1}{p - 2} \right)\right).
\ee
These types of products and sums over primes with \emph{logarithmic weighting} are much easier to evaluate than  the unweighted primes in the prime number theorem and can be estimated using some elementary results of Merten, see \cite{Ingham1932} or \cite{MontgomeryVaughan2007}.

\begin{lemma}[Merten] We have
\begin{equation}\label{5.3} \prod_{p\le y}\left(1-\frac1{p}\right)^{-1}= e^{\gamma} \log y +O(1), \end{equation}
and
\begin{equation}\label{5.4} \sum_{p\le y} \frac1{p} = \log\log y + b +O(\frac{1}{\log y}),\end{equation}
where $\gamma$ is Euler's constant and $b$ is a constant.
\end{lemma}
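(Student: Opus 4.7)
The plan is to derive both statements from Mertens' first theorem
\[ \sum_{p\le y}\frac{\log p}{p}=\log y + O(1), \]
and then to identify the multiplicative constant as $e^{\gamma}$. To prove the first theorem I would start from Stirling's formula $\sum_{n\le y}\log n = y\log y - y + O(\log y)$ and expand the left side via the identity $\log n = \sum_{d\mid n}\Lambda(d)$ to obtain $\sum_{d\le y}\Lambda(d)\lfloor y/d\rfloor$; approximating the floor by $y/d$ and using the elementary Chebyshev bound $\psi(y)=O(y)$ yields $\sum_{d\le y}\Lambda(d)/d=\log y+O(1)$. The contribution of higher prime powers $p^k$, $k\ge 2$, forms an absolutely convergent series $\sum_{p}\sum_{k\ge 2}(\log p)/p^k$, and removing it gives the stated sum over primes.

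Next, to obtain \eqref{5.4}, I would apply Abel summation with weight $1/\log t$. Writing $S(y)=\sum_{p\le y}(\log p)/p = \log y + R(y)$ with $R(y)=O(1)$, one gets
\[ \sum_{p\le y}\frac{1}{p} = \frac{S(y)}{\log y} + \int_2^y \frac{S(t)}{t\log^2 t}\,dt. \]
The main terms produce $1 + \int_2^y dt/(t\log t) = \log\log y + O(1)$, while the tail integral involving $R(t)$ converges to a constant (since $\int_2^{\infty} dt/(t\log^2 t)<\infty$) with remainder $O(1/\log y)$. This yields $\sum_{p\le y}1/p=\log\log y+b+O(1/\log y)$ for some constant $b$.

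For \eqref{5.3} I would exponentiate after using the Taylor expansion $-\log(1-1/p) = 1/p + \sum_{k\ge 2}1/(kp^k)$, noting that the double sum $\sum_{p}\sum_{k\ge 2}1/(kp^k)$ converges absolutely. Summing over $p\le y$ and combining with \eqref{5.4}, one gets $\sum_{p\le y}\log(1-1/p)^{-1}=\log\log y+b'+O(1/\log y)$ for a constant $b'$. Exponentiating yields $\prod_{p\le y}(1-1/p)^{-1}=e^{b'}\log y\,(1+O(1/\log y))=e^{b'}\log y+O(1)$.

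The main obstacle is identifying $e^{b'}=e^{\gamma}$. The classical Mertens route compares the truncated Euler product with the harmonic sum: writing $\prod_{p\le y}(1-1/p)^{-1}=\sum_{P^+(n)\le y}1/n$ where $P^+$ denotes the largest prime factor, one splits this as $\sum_{n\le y}1/n+\sum_{n>y,\,P^+(n)\le y}1/n$, invokes $\sum_{n\le y}1/n = \log y + \gamma + O(1/y)$, and controls the smooth-number tail via a partial-summation or Rankin-style argument to extract exactly $\gamma$. Alternatively, one analyzes $\log\zeta(s)=\sum_p\sum_{k\ge 1}1/(kp^{ks})$ as $s\to 1^+$, using the Laurent expansion $\zeta(s)=1/(s-1)+\gamma+O(s-1)$ and a Tauberian matching of constant terms to conclude $b'=\gamma$. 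Either approach is the technically delicate step in which the specific value of the constant, as opposed to merely its existence, is pinned down.
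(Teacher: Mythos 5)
The paper does not prove this lemma at all: it is the classical theorem of Mertens, quoted with a citation to Ingham and to Montgomery--Vaughan. Your outline is a correct sketch of exactly the standard proof found in those references: Mertens' first theorem via Stirling and $\log n=\sum_{d\mid n}\Lambda(d)$ with a Chebyshev bound, then partial summation for \eqref{5.4}, then exponentiation of $-\log(1-1/p)=1/p+O(1/p^2)$ for \eqref{5.3}. You also correctly identify the one genuinely delicate point, namely pinning the constant in \eqref{5.3} down to $e^{\gamma}$ rather than merely some $e^{b'}$; the ``Tauberian matching'' of constant terms in $\log\zeta(s)$ as $s\to 1^{+}$ does need a real argument (one cannot simply set $s=1$), but both routes you name are the standard ones and can be carried out rigorously. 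Since the paper only uses the lemma as a black box, there is nothing to compare beyond noting that your sketch is the classical proof it implicitly relies on.
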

We often will make use of the estimate, for fixed constants $0<a<b$, 
\begin{equation} \label{5.5}\sum_{ a\log x \le p \le b\log x} \frac1{p} \ll \frac1{\log\log x} ,\end{equation}
which follows immediately on differencing in \eqref{5.4}. This estimate in turn implies that
\begin{equation}\label{5.6}\begin{split} \prod_{ a\log x \le p \le b\log x} \left(1 + O\left(\frac{1}{p}\right)\right)&= \exp\left( \sum_{ a\log x \le p \le b\log x}\log\left( 1+O\left(\frac1p\right)\right) \right)\\ &= \exp\left( \sum_{ a\log x \le p \le b\log x}O\left(\frac1p\right)\right)\\ &=\exp\left( O(\frac{1}{\log\log x})\right)\\ & = 1+O(\frac1{\log\log x}). \end{split} \end{equation}
  
Now consider the divisor sum
\begin{equation}\label{5.7} M(d) := \sum_{p|d}\frac1{p}, \end{equation}
and let
\begin{equation}\label{5.8} M^*(x) := \max_{1\le d\le x} M(d).\end{equation}
It is clear that 
\begin{equation}
\label{5.9} M^*(x) = M(\lfloor x\rfloor^\sharp) = \sum_{p\le p_k}\frac1{p}
\end{equation}
where $\lfloor x\rfloor^\sharp = p_{k}^{\sharp}$. We have by \eqref{eq34} that  $p_k \sim \log x$ and therefore by Lemma 3
\begin{equation}
\label{5.10}M^*(x) = \log\log\log x +O(1)  . 
\end{equation}
Finally, returning to \eqref{5.2}, we have by \eqref{5.10}
\begin{equation}
\label{5.11} \mathfrak{S}(d) \ll \exp\left( M(d) +O(1)\right)\  \le \ \exp\left( M^*(d) +O(1)\right) \ll \log\log d. 
\end{equation}

\section{The Prime Difference Champions Go to Infinity \label{S_PDCsInfinity}}
We now prove the PDC's go to infinity and have many prime factors.
\begin{theorem} Suppose $d^*=d^*(x)$ is a prime difference champion for primes $\le x$. Then $d^* \to \infty$ as $x\to \infty$ and the number of distinct prime factors of $d^*$ also goes to infinity as $x\to \infty$.
\end{theorem}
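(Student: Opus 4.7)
The plan is to reduce both claims to showing that $\mathfrak{S}(d^{*}(x)) \to \infty$ as $x \to \infty$. Indeed, \eqref{5.11} gives $\mathfrak{S}(d) \ll \log\log d$, so $\mathfrak{S}(d^{*}) \to \infty$ forces $d^{*} \to \infty$; and \eqref{singprod} shows $\mathfrak{S}(d)$ is uniformly bounded whenever the number of distinct prime factors of $d$ is bounded (the maximum for fixed $\omega(d)$ being attained at primorials), so $\mathfrak{S}(d^{*}) \to \infty$ also forces the number of distinct prime factors of $d^{*}$ to tend to infinity. For the upper bound on $G(x, d^{*})$ I would re-use the Selberg sieve bound already invoked in Case 4 of Lemma 2,
\be
G(x, d) \le 8 \, \mathfrak{S}(d) \frac{x}{(\log x)^{2}}\bigl(1 + o(1)\bigr),
\ee
uniformly for $2 \le d \le x - x/(\log x)^{3}$; the PDC lies in this range because the lower bound for $G(x, d^{*})$ derived below is much larger than the trivial $O(x/(\log x)^{3})$ estimate for $d$ beyond it.

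The lower bound on $G(x, d^{*})$ comes from averaging $G(x, d)$ over multiples of a judiciously chosen primorial. Take $Q = p_{k(x)}^{\sharp}$ with $p_{k(x)} = \lfloor \log\log x \rfloor$, so that $\log Q \sim \log\log x$, i.e. $Q = (\log x)^{1+o(1)}$, and the Mertens product developed in Section \ref{S_LogSums} gives
\be
\mathfrak{S}(Q) \sim e^{\gamma}\log p_{k(x)} \sim e^{\gamma}\log\log\log x \longrightarrow \infty.
\ee
The key elementary identity is that $Q \mid p'-p$ if and only if $p \equiv p' \pmod{Q}$, so
\be
\sum_{\substack{d \le x \\ Q \mid d}} G(x, d) \;=\; \sum_{\substack{a \pmod{Q} \\ (a, Q) = 1}} \binom{\pi(x; Q, a)}{2}.
\ee
Since $Q$ lies well inside the Siegel--Walfisz range, $\pi(x; Q, a) = \pi(x)/\phi(Q) + O\bigl(x \exp(-c\sqrt{\log x})\bigr)$ uniformly in $a$, and so the right-hand side is asymptotic to $\pi(x)^{2}/(2\phi(Q)) \sim x^{2}/(2 \phi(Q) (\log x)^{2})$. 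Dividing by the $\lfloor x/Q \rfloor$ multiples of $Q$ in $[1, x]$, the average of $G(x, d)$ over these multiples is
\be
\sim \frac{Q}{\phi(Q)} \cdot \frac{x}{2(\log x)^{2}} \;\sim\; \mathfrak{S}(Q) \cdot \frac{x}{2(\log x)^{2}},
\ee
where we used the Mertens evaluation $Q/\phi(Q) = \prod_{p \le p_{k}}(1 - 1/p)^{-1} \sim e^{\gamma} \log p_{k} \sim \mathfrak{S}(Q)$ from Section \ref{S_LogSums}. Because $d^{*}$ maximizes $G(x, \cdot)$ over all $d$, in particular over the multiples of $Q$, it exceeds this average, giving $G(x, d^{*}) \gg \mathfrak{S}(Q)\, x/(\log x)^{2}$. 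Comparing with the Selberg upper bound then yields $\mathfrak{S}(d^{*}) \gg \mathfrak{S}(Q) \to \infty$, completing the proof.

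Remarkably, the whole argument rests only on the Selberg sieve upper bound, the Mertens product formulas of Section \ref{S_LogSums}, and Siegel--Walfisz for moduli up to a fixed power of $\log x$: no deep input such as Bombieri--Vinogradov, short-interval PNT in progressions, or a second-moment variance estimate for prime pairs is required. The main delicate point is the calibration of $Q$: it must be small enough to lie in the Siegel--Walfisz range but large enough that $\mathfrak{S}(Q) \to \infty$. The choice $p_{k(x)} \sim \log\log x$ achieves both with room to spare, and in fact produces the quantitative bound $\mathfrak{S}(d^{*}) \gg \log\log\log x$, matching the upper bound in \eqref{5.10}--\eqref{5.11}.
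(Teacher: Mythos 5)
Your argument is correct and shares the paper's overall skeleton: lower-bound $G^*(x)$ by averaging $G(x,d)$ over multiples of a primorial $q$ via the identity $\sum_{q\mid d}G(x,d)=\sum_{a}\binom{\pi(x;q,a)}{2}$, compare with the sieve upper bound $G(x,d)\le \mathcal{C}\,\mathfrak{S}(d)\frac{x}{(\log x)^2}(1+o(1))$, and deduce $\mathfrak{S}(d^*)\to\infty$, which forces both conclusions. Where you genuinely diverge is in how the second moment $\sum_a \pi(x;q,a)^2$ is treated. The paper's Lemma 5 uses only the nonnegativity of $\sum_a\bigl(\pi(x;q,a)-\frac{\pi(x)}{\phi(q)}\bigr)^2$, i.e.\ Cauchy--Schwarz, to get $\sum_a\pi(x;q,a)^2\ge \frac{\pi(x)^2}{\phi(q)}$; this requires no equidistribution input at all and is therefore valid for $q$ as large as $\lfloor x/(\log x)^2\rfloor^\sharp=p_\ell^\sharp$ with $p_\ell\sim\log x$, yielding $\mathfrak{S}(d^*)\gg\log p_\ell\gg\log\log x$. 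You instead invoke Siegel--Walfisz, which confines $Q$ to $(\log x)^{1+o(1)}$ and yields only $\mathfrak{S}(d^*)\gg\log\log\log x$. Both suffice for Theorem 2, but two of your side remarks need correction: (i) it is the paper's bound $\gg\log\log x$, not yours, that matches the upper bound $\mathfrak{S}(d)\ll\log\log d$ of \eqref{5.11}; your $\log\log\log x$ matches $M^*(x)$ in \eqref{5.10}, which is a different quantity; and (ii) Siegel--Walfisz is precisely the nontrivial input that the variance trick renders unnecessary, so on this step your route is less elementary than the paper's, not more. Minor points: your restriction of the residue sum to $(a,Q)=1$ is legitimate (classes with $(a,Q)>1$ contribute no pairs $p<p'$ with $Q\mid p'-p$), your observation that $\mathfrak{S}(p_k^\sharp)\sim Q/\phi(Q)\sim e^\gamma\log p_k$ is correct, and your fencing-off of $d$ near $x$ is harmless but unnecessary, since $y/(\log y)^2$ is increasing and the sieve bound as stated in Lemma 4 already holds uniformly for $1\le d\le x$.
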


First, we need a simplier version of the same sieve bound for prime pairs we used in the proof of Case 4 of Lemma 2.

\begin{lemma}  We have for $d$ positive and even
\begin{equation}  G(x,d) \le \mathcal{C}\, \mathfrak{S}(d) \frac{x}{(\log x)^2}\left(1+o(1)\right), \end{equation}
uniformly for $1\le d\le x$, where $\mathcal{C}$ is a constant.
\end{lemma}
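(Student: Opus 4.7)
The plan is to deduce this lemma almost directly from the sieve bound of Halberstam--Richert that was already invoked in Case 4 of the proof of Lemma 2, namely
\begin{equation*}
G(x,d) - G(x-y, d) \;\le\; 8\,\mathfrak{S}(d)\,\frac{y}{(\log y)^2}\left(1 + O\!\left(\frac{\log\log 3y}{\log y}\right)\right),
\end{equation*}
valid for $1 < y \le x$. I would first observe the triviality $G(d,d) = 0$, since any two primes $p < p' \le d$ cannot differ by $d$. Writing $G(x,d) = G(x,d) - G(d,d)$, I apply the Halberstam--Richert estimate with $y = x - d$, which is the same manoeuvre used in Case~4 of Lemma~2.

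Next I would split the range of $d$. In the main range $1 \le d \le x - x/(\log x)^3$, we have $y = x - d \ge x/(\log x)^3$, so
\begin{equation*}
\log y \;\ge\; \log x - 3\log\log x \;=\; (\log x)(1+o(1)),
\end{equation*}
uniformly, and the Halberstam--Richert estimate immediately yields
\begin{equation*}
G(x,d) \;\le\; 8\,\mathfrak{S}(d)\,\frac{x-d}{(\log(x-d))^2}(1+o(1)) \;\le\; 8\,\mathfrak{S}(d)\,\frac{x}{(\log x)^2}(1+o(1)),
\end{equation*}
which gives the desired bound with $\mathcal{C} = 8$ in this range.

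It remains to handle the short tail $x - x/(\log x)^3 < d \le x$, where the sieve bound becomes trivial because $\log(x-d)$ is no longer comparable to $\log x$. Here I would discard the sieve altogether and use the trivial observation that any prime $p' = p + d \le x$ lies in the interval $(d, x]$, which has length less than $x/(\log x)^3$; hence
\begin{equation*}
G(x,d) \;\le\; \pi(x) - \pi(d) \;\ll\; \frac{x}{(\log x)^3} \;=\; o\!\left(\frac{x}{(\log x)^2}\right).
\end{equation*}
Since $\mathfrak{S}(d) \ge 2C_2 > 0$ for every positive even $d$, this bound is absorbed into $\mathfrak{S}(d)\, x/(\log x)^2$ with room to spare, so the stated inequality holds with the same absolute constant $\mathcal{C}$ (adjusted if needed).

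There is not really a serious obstacle here; the only point requiring care is the boundary $d \to x$, where the sieve output degenerates and must be replaced by the trivial primes-in-a-short-interval bound, and where one uses the elementary lower bound $\mathfrak{S}(d) \ge 2C_2$ to absorb the error. Everything else is a direct quotation of the Halberstam--Richert sieve, exactly as in the earlier Case~4 argument.
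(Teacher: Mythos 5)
Your proposal is correct, but it takes a different route from the paper. The paper offers no derivation at all for this lemma: it simply cites the Halberstam--Richert sieve literature, noting that the value $\mathcal{C}=4$ follows from the Bombieri--Vinogradov theorem. You instead re-derive the bound from the incremental sieve estimate $G(x,d)-G(x-y,d)\le 8\,\mathfrak{S}(d)\,y/(\log y)^2\left(1+O\left(\frac{\log\log 3y}{\log y}\right)\right)$ already quoted in Case 4 of Lemma 2, taking $y=x-d$ and treating the degenerate tail $x-x/(\log x)^3<d\le x$ by the trivial count of primes in a short interval together with $\mathfrak{S}(d)\ge 2C_2$. That argument is sound and has the virtue of being self-contained given what is already on the page, and of making the uniformity in $d$ (including $d$ near $x$) explicit, which the paper glosses over. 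The cost is the constant: you obtain $\mathcal{C}=8$ rather than $\mathcal{C}=4$. For Lemma 4 and Theorem 2 this is immaterial, since only the existence of some constant is needed; but the paper later uses $\mathcal{C}=4$ explicitly in the proof of Theorem 3, where the final bound is $\log(2\mathcal{C})=\log 8<2.08$, and in Corollary 3, where the threshold $25{,}583$ depends on it. With your $\mathcal{C}=8$ those numerical conclusions would weaken to $\log 16\approx 2.78$ and a larger prime threshold, so if you want the paper's stated constants you still need the sharper Bombieri--Vinogradov input.
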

The value $\mathcal{C}=4$ follows from the Bombieri-Vinogradov theorem, see \cite{HalberstamRichert1974}. (Slightly smaller values of $\mathcal{C}$ are known to hold.) 

Our next lemma finds values of $d$ for which $G(x,d)$ is large and thus gives a lower bound on $G^*(x)$. 

\begin{lemma} Let $1\le q\le x $.  Then
\begin{equation}\sum_{1\le m\le \frac{x}{q}} G(x,mq) \ge \frac12 \left(\frac{x^2}{\phi(q)(\log x)^2}- \frac{x}{\log x}\right)(1+o(1)). \end{equation}
\end{lemma}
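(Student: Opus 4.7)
The plan is to reinterpret the sum on the left as counting ordered pairs of primes that lie in the same residue class modulo $q$, and then to apply Cauchy--Schwarz together with the Prime Number Theorem.

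First, I would observe that for each $m\ge 1$, $G(x,mq)$ counts pairs of primes $p<p'\le x$ with $p'-p=mq$. Summing over $1\le m\le x/q$ therefore gives the number of pairs of distinct primes $\le x$ satisfying $p\equiv p'\pmod q$. If some prime $p$ dividing $q$ occurs in such a pair, then $p\mid p'$ forces $p'=p$, which is excluded; hence both $p$ and $p'$ are automatically coprime to $q$. Grouping by the common residue class mod $q$ produces the identity
\begin{equation*}
\sum_{1\le m\le x/q} G(x,mq) \;=\; \sum_{\substack{a\pmod q\\ (a,q)=1}} \binom{\pi(x;q,a)}{2},
\end{equation*}
where $\pi(x;q,a)$ is the usual counting function of primes $\le x$ in the class $a\pmod q$.

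Next, I would apply Cauchy--Schwarz over the $\phi(q)$ coprime residue classes:
\begin{equation*}
\sum_{(a,q)=1}\pi(x;q,a)^2 \;\ge\; \frac{1}{\phi(q)}\Bigl(\sum_{(a,q)=1}\pi(x;q,a)\Bigr)^2.
\end{equation*}
The inner sum equals $\pi(x)-\omega(q)$, since the only primes $\le x$ not counted by the coprime classes are the prime divisors of $q$. Expanding $\binom{n}{2}=\tfrac12(n^2-n)$ and using Cauchy--Schwarz yields
\begin{equation*}
\sum_{1\le m\le x/q}G(x,mq) \;\ge\; \frac12\Bigl(\frac{(\pi(x)-\omega(q))^2}{\phi(q)} \;-\; (\pi(x)-\omega(q))\Bigr).
\end{equation*}
Finally, I would invoke the bound $\omega(q)\le \log_2 q \le \log_2 x$, which is $o(\pi(x))$ uniformly in $1\le q\le x$, together with the Prime Number Theorem $\pi(x)=\frac{x}{\log x}(1+o(1))$. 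This converts the two terms into $\frac{x^2}{\phi(q)(\log x)^2}(1+o(1))$ and $\frac{x}{\log x}(1+o(1))$ respectively, giving the asserted lower bound.

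There is no serious obstacle; the only mildly delicate point is the interpretation of the single factor $(1+o(1))$ multiplying the difference. When $\phi(q)$ is comparable to or larger than $x/\log x$ the parenthetical is nonpositive and the statement is vacuous, while for $\phi(q)=o(x/\log x)$ the first term dominates the second and the two individual $(1+o(1))$ contributions can be combined into one without loss. I would simply record the estimate with a $(1+o(1))$ on each of the two leading contributions, which is the natural content of the inequality and which trivially implies the stated form.
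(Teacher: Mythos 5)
Your proposal is correct and is essentially the paper's own argument: the identity $\sum_{1\le m\le x/q}G(x,mq)=\sum_{(a,q)=1}\binom{\pi(x;q,a)}{2}$ is the same second-moment reduction the paper uses (it writes $\sum_a\pi(x;q,a)^2=\pi(x)+2\sum_{q\mid d}G(x,d)$ and bounds the non-coprime classes by $O(\log q)$), and your Cauchy--Schwarz step is identical to the paper's expansion of $\sum_{(a,q)=1}\bigl(\pi(x;q,a)-\pi(x)/\phi(q)\bigr)^2\ge 0$. The only cosmetic difference is that you work with unordered pairs and coprime classes from the outset, which slightly streamlines the bookkeeping.
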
 
\begin{proof} Let 
$\pi(x;q,a)$ denote the number of primes $\le x$ which are congruent to $a$ modulo $q$. 
Since
\be 
\sum_{\substack{1\le a\le q\\(a,q)=1}}\left(
\pi(x;q,a)- \frac{\pi(x)}{\phi(q)}\right)^2 \ge 0,
\ee
we have on multiplying out that
\begin{equation} 
\sum_{\substack{1\le a\le q\\(a,q)=1}}\pi(x;q,a)^2\ge 2 \frac{\pi(x)}{\phi(q)}\sum_{\substack{1\le a\le q\\(a,q)=1}}\pi(x;q,a)-\frac{\pi(x)^2}{\phi(q)}.
\end{equation}
When $(a,q)>1$ we have $\pi(x;q,a)= 0 \ \text{or} \ 1$  Therefore
\be 
\sum_{\substack{1\le a\le q\\(a,q)>1}}\pi(x;q,a)=\sum_{\substack{1\le a\le q\\(a,q)>1}}\pi(x;q,a)^2 \le \sum_{p|q}1 \ll \log q,
\ee
and therefore we  drop the condition $(a,q)=1$ in both sums above and obtain
\begin{equation} \sum_{1\le a\le q}\pi(x;q,a)^2\ge 2 \frac{\pi(x)}{\phi(q)} \sum_{1\le a\le q}\pi(x;q,a)-\frac{\pi(x)^2}{\phi(q)} +O(\frac{\pi(x)\log q}{\phi(q)})+O(\log q).
\end{equation}
Now
\be 
\sum_{1\le a\le q}\pi(x;q,a) = \pi(x),
\ee
and
\be
\begin{split}  \sum_{1\le a\le q}\pi(x;q,a)^2 &=  \sum_{1\le a\le q}\Big(\sum_{\substack{p,p'\le x\\ p'\equiv p \equiv a \, (\text{mod }q)}}1\Big) \\ & = \sum_{\substack{p,p'\le x\\ p'\equiv p  (\text{mod }q)}}1 \\&
= \sum_{\substack{-x\le d\le x\\ q|d}} \sum_{\substack{p,p'\le x\\ p'-p=d}}1\\&
= \pi(x) + 2\sum_{\substack{1\le d\le x\\ q|d}}G(x,d),
\end{split}
\ee
and therefore we obtain
\begin{equation} \label{6.5} \begin{split} 2\sum_{\substack{1\le d\le x\\ q|d}}G(x,d) &\ge \frac{\pi(x)^2}{\phi(q)} -\pi(x) +O(\frac{\pi(x)\log q}{\phi(q)})+O(\log q)\\& \ge \left(\frac{x^2}{\phi(q)(\log x)^2}- \frac{x}{\log x}\right)(1+o(1)),\end{split} \end{equation}
where in the last line we used the prime number theorem in the form
\begin{equation} \label{6.6} \pi(x) = \frac{x}{\log x}(1+o(1)). \end{equation}  
\end{proof} 

\begin{proof}[Proof of Theorem 2] Since 
\be 
G^*(x) \frac{x}{q} \ge \sum_{1\le m\le \frac{x}{q}} G(x,mq),
\ee
by Lemma 5 we conclude that, recalling $\phi(q) = q\prod_{p|q}(1- \frac{1}{p})$,  
\begin{equation} \label{6.7} \begin{split} G^*(x)&\ge \frac12 \frac{q}{\phi(q)} \left(\frac{x}{(\log x)^2}- \frac{q}{\log x}\right)(1+o(1)) \\ & = \frac12 \prod_{p|q}\left(1-\frac1{p}\right)^{-1} \left(\frac{x}{(\log x)^2}- \frac{q}{\log x}\right)(1+o(1)) . \end{split} \end{equation}
We now choose $ q=\lfloor \frac{x}{(\log x)^2}\rfloor^\sharp= p_{\ell}^{\sharp}$, and thus  $p_\ell \sim \log x$. Hence 
\be 
\label{6.8}  G^*(x) \ge \frac12\prod_{p\le p_\ell}\left(1-\frac1{p}\right)^{-1}\frac{x}{(\log x)^2}(1  + o(1)).
\ee
By Lemma 4, we have that if $d^*$ is a PDC for primes $\le x$, then 
\begin{equation} \label{6.9}  G^*(x) \le \mathcal{C}\, \mathfrak{S}(d^*) \frac{x}{(\log x)^2}\left(1+o(1)\right).\end{equation}
Combining \eqref{6.8} and \eqref{6.9} we have
\begin{equation} \label{6.10}  \mathcal{C}\, \mathfrak{S}(d^*)  \ge \frac12\prod_{p\le p_{\ell}}\left(1-\frac1{p}\right)^{-1}(1  + o(1)), \end{equation}
which by Lemma 3 gives
\be  
\mathfrak{S}(d^*)  \ge \frac{e^\gamma}{2\mathcal{C}}\log p_{\ell}(1  + o(1))\gg \log\log x, 
\ee
and hence by the first part of \eqref{5.11} we obtain
\begin{equation} \label{6.11}
\sum_{p \mid d^*} \frac1p \gg \log\log\log x .
\end{equation}

Therefore $d^*\to \infty$ and the number of distinct prime factors of $d^*$  also go to infinity, as $ x \to \infty$, which proves Theorem 2. 
\end{proof}
We can obtain a slightly more precise result with a little more effort.
\begin{theorem} We have
\begin{equation} \label{6.12}   \sum_{\substack{p\le 2\log x\\ p  \nmid d^*(x)}}\frac1{p} \le \log(2\mathcal{C})(1+o(1))< 2.08. \end{equation}
\end{theorem}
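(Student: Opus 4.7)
The strategy is to combine a sharp lower bound on $\log \mathfrak{S}(d^*)$ extracted from the proof of Theorem 2 with a direct expansion of $\log \mathfrak{S}(d^*)$ as a sum over its prime divisors, using a Mertens-type asymptotic so that the leading $\log\log\log x$ terms cancel cleanly.

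First, I would sharpen the conclusion of Theorem 2 as follows. From \eqref{6.10} and the explicit Mertens formula \eqref{5.3} together with $p_\ell \sim \log x$, I get
\begin{equation*}
\log \mathfrak{S}(d^*) \ge \gamma + \log\log\log x - \log(2\mathcal{C}) + o(1).
\end{equation*}

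Second, I would establish the companion Mertens-type asymptotic
\begin{equation*}
\sum_{2 < p \le y}\log\Big(1 + \frac{1}{p-2}\Big) = \log\log y + \gamma - \log(2C_2) + o(1),
\end{equation*}
which can be derived from the algebraic identity
\begin{equation*}
\frac{p-1}{p-2} = \frac{p}{p-1}\cdot\frac{1}{1 - 1/(p-1)^2},
\end{equation*}
by taking logarithms and applying Lemma 3 to the first factor and the definition \eqref{3.3} of $C_2$ to the (convergent) second factor.

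Third, starting from \eqref{5.2}, I would split the primes dividing $d^*$ into those $\le 2\log x$ and those $> 2\log x$. Since $d^* \le x$, there are at most $\log x/\log(2\log x)$ prime divisors larger than $2\log x$, and each contributes $O(1/\log x)$ to the logarithmic sum, hence $o(1)$ in total. Applying the Mertens-type asymptotic at $y = 2\log x$ then yields the exact identity
\begin{equation*}
\log \mathfrak{S}(d^*) = \log\log\log x + \gamma - \sum_{\substack{2 < p \le 2\log x \\ p \nmid d^*}}\log\Big(1 + \frac{1}{p-2}\Big) + o(1).
\end{equation*}
Combining with the lower bound, the $\log\log\log x$ and $\gamma$ terms cancel, leaving
\begin{equation*}
\sum_{\substack{2 < p \le 2\log x \\ p \nmid d^*}}\log\Big(1 + \frac{1}{p-2}\Big) \le \log(2\mathcal{C}) + o(1).
\end{equation*}
To conclude, I would use the elementary inequality $\log(1 + 1/(p-2)) = -\log(1 - 1/(p-1)) \ge 1/(p-1) \ge 1/p$, together with the fact that $d^*$ is even so $p=2$ is automatically excluded from the indexing set. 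With $\mathcal{C} = 4$ coming from the Bombieri--Vinogradov theorem, one has $\log(2\mathcal{C}) = \log 8 = 2.0794\ldots < 2.08$.

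The main obstacle is the Mertens-type calculation in Step 2: the proof hinges on obtaining \emph{precisely} the constant $\gamma - \log(2C_2)$ so that it cancels exactly with the $\log(2C_2)$ factor present in the definition of $\mathfrak{S}$ and with the $\gamma$ arising from Mertens in the lower bound. Any error in this constant would either weaken the bound past the threshold $2.08$ or, worse, leave an unmatched $\log\log\log x$ term that blows up.
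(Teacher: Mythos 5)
Your proposal is correct, and it rests on the same two pillars as the paper's proof: the lower bound \eqref{6.10} combined with Mertens' formula \eqref{5.3}, and the identity $\left(\frac{p-1}{p-2}\right)\left(1-\frac1p\right) = \left(1-\frac{1}{(p-1)^2}\right)^{-1}$ (your factorization $\frac{p-1}{p-2}=\frac{p}{p-1}\cdot\frac{1}{1-1/(p-1)^2}$ is the same identity rearranged). The difference is organizational: the paper stays multiplicative, splitting the left side of \eqref{6.14} into $2C_2P_1P_2P_3$ and disposing of the constant via the clean one-sided bound $P_2\le\prod_{p>2}(1-\frac{1}{(p-1)^2})^{-1}=\frac{1}{C_2}$, so it never needs an asymptotic for $\sum_{2<p\le y}\log(1+\frac{1}{p-2})$ with an exact constant; you instead work additively and must nail the constant $\gamma-\log(2C_2)$ so that $\gamma$ and $\log\log\log x$ cancel against your sharpened form of Theorem 2 --- your own closing remark correctly identifies this as the fragile point, and your computation of the constant checks out. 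Your treatment of the prime divisors of $d^*$ exceeding $2\log x$ (at most $\log x/\log(2\log x)$ of them, each contributing $O(1/\log x)$) is actually cleaner than the paper's bound on $P_1$, which compares the divisors of $d^*$ above $p_\ell$ to those of $\lfloor x\rfloor^\sharp$ even though $d^*$ need not divide a primorial. Two small points to make explicit: your final inequality should be $\log(1+\frac{1}{p-2})=-\log(1-\frac{1}{p-1})\ge\frac{1}{p-1}>\frac1p$ (as you state), and the claim that $p=2$ is excluded does require the observation that $G(x,d)\le 1$ for odd $d$, so every PDC is even for large $x$; the paper leaves this implicit as well.
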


\begin{corollary} We have
\be 
\sum_{p \mid d^*(x)} \frac1p \sim M^*(x) \sim \log\log\log x.
\ee
\end{corollary}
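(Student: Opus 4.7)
The plan is to sandwich $\sum_{p\mid d^*(x)}1/p$ between matching upper and lower bounds, each of leading order $\log\log\log x$, and then invoke \eqref{5.10} to identify the common value with $M^*(x)$.

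The upper bound is essentially free. Every prime difference counted by $G(x,d)$ lies in $[1,x]$, so any PDC satisfies $d^*(x)\le x$, and then by the definitions \eqref{5.7}--\eqref{5.9} together with \eqref{5.10},
$$\sum_{p\mid d^*(x)}\frac{1}{p} \;=\; M(d^*(x)) \;\le\; M^*(x) \;=\; \log\log\log x + O(1).$$
For the matching lower bound I would feed Theorem 3 into Mertens's estimate \eqref{5.4}. Taking $y = 2\log x$ in \eqref{5.4} and expanding $\log\log(2\log x) = \log\log\log x + O(1/\log\log x)$ yields $\sum_{p\le 2\log x}1/p = \log\log\log x + O(1)$. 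Subtracting the bound supplied by Theorem 3 then gives
$$\sum_{\substack{p\mid d^*(x)\\ p\le 2\log x}}\frac{1}{p} \;=\; \sum_{p\le 2\log x}\frac{1}{p} \;-\; \sum_{\substack{p\le 2\log x\\ p\nmid d^*(x)}}\frac{1}{p} \;\ge\; \log\log\log x - \log(2\mathcal{C}) + o(1),$$
and of course extending the range of summation to all $p\mid d^*(x)$ only increases the sum. Combining the two inequalities and citing \eqref{5.10} once more delivers $\sum_{p\mid d^*(x)}1/p \sim \log\log\log x \sim M^*(x)$, which is precisely the corollary.

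I do not anticipate any genuine obstacle: the corollary is a bookkeeping consequence of Theorem 3, Mertens (Lemma 3), and the estimate \eqref{5.10} on $M^*$. The only mild delicacy is verifying that both the Mertens constant $b$ and the constant $\log(2\mathcal{C}) < 2.08$ coming from Theorem 3 are $O(1)$ and therefore negligible against the divergent quantity $\log\log\log x$, so that the two-sided bound above actually produces the asymptotic $\sim$ rather than a mere big-$O$ relation.
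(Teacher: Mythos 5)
Your argument is correct and is exactly the route the paper intends: the paper disposes of this corollary with the single line that it ``follows from \eqref{5.10} and \eqref{6.12},'' and your write-up simply supplies the details --- the trivial upper bound $M(d^*)\le M^*(x)$ from $d^*\le x$, and the lower bound obtained by subtracting the $O(1)$ quantity of Theorem 3 from Mertens's estimate for $\sum_{p\le 2\log x}1/p$. Nothing further is needed.
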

Hence we see that the PDCs must have asymptotically the maximal number of prime factors when the factors are weighted logarithmically. However this does not imply that the PDCs must have many small prime factors, but we can prove they must have a few \lq \lq small" prime factors.

\begin{corollary} For sufficiently large $x$, every PDC is divisible by an odd prime $\le 25,583$.
\end{corollary}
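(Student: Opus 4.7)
The plan is to apply Theorem 3 with the explicit value $\mathcal{C}=4$ from Lemma 4 and run a short contradiction argument. Suppose, for contradiction, that there is an unbounded sequence of $x$ along which the PDC $d^*(x)$ has no odd prime factor in the interval $[3,\,25583]$. Once $x$ is large enough that $2\log x \ge 25583$, every odd prime $p\le 25583$ lies in the summation range appearing in Theorem 3 and, by hypothesis, fails to divide $d^*(x)$, so
\[
\sum_{3\le p\le 25583}\frac1p \;\le\; \sum_{\substack{p\le 2\log x \\ p\nmid d^*(x)}}\frac1p \;\le\; \log(2\mathcal{C})(1+o(1)) \;=\; \log 8 + o(1).
\]
Letting $x\to\infty$ along the sequence yields $\sum_{3\le p\le 25583}1/p \le \log 8 = 3\log 2 \approx 2.0794$. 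The integer $25583$ is then chosen to be precisely the smallest prime $P$ for which the partial sum $\sum_{3\le p\le P}1/p$ strictly exceeds $\log 8$, so this inequality is impossible and the contradiction is delivered.

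A small preliminary observation makes the statement well-posed: $d^*(x)$ is itself even for all sufficiently large $x$, so asking about its odd prime divisors is natural and the $p=2$ term in Theorem 3's sum contributes nothing on the left above. This follows elementarily from the fact that for odd $d\ge 3$, $p'-p=d$ forces $p=2$, hence $G(x,d)\le 1$; combined with $G^*(x)\to\infty$ from Theorem 2 this forces $d^*(x)$ even beyond a modest threshold.

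The main (and really only) obstacle is arithmetical rather than analytic: one must verify the strict inequality $\sum_{3\le p\le 25583}1/p > \log 8$ with enough quantitative slack to absorb the $o(1)$ error from Theorem 3 for $x$ beyond an explicit bound. This is a direct summation over a table of primes — Mertens' theorem from Lemma 3 predicts that the threshold $\log 8$ is reached in the vicinity of $\log\log y \approx \log 8 - B \approx 1.82$, i.e.\ $y \approx e^{6.16} \approx 474$, which of course is far too crude because of the $O(1/\log y)$ error in Lemma 3; instead one needs the actual partial sum, and $25583$ emerges as precisely the first prime at which the threshold is crossed. A sharper value of $\mathcal{C}$ in Lemma 4 (the authors note $\mathcal{C}<4$ is admissible) would replace $\log 8$ by a smaller constant and hence shrink $25583$, but the proof schema is identical.
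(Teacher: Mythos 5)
Your proposal is correct and follows essentially the same route as the paper: both deduce the corollary from Theorem 3 (with $\mathcal{C}=4$, hence the bound $\log 8$) by observing that if no odd prime $p\le 25583=p_{2817}$ divided $d^*(x)$, then for large $x$ all such primes would appear in the sum $\sum_{p\le 2\log x,\, p\nmid d^*}1/p$, forcing $\sum_{3\le p\le 25583}1/p\le \log 8(1+o(1))$, which contradicts the numerically verified strict inequality $\sum_{3\le p\le 25583}1/p>\log 8$. Your added remarks on the evenness of $d^*$ and on the role of the $o(1)$ slack are consistent with, though not spelled out in, the paper's one-line argument.
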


\begin{proof}[Proof of Theorem 3]
Continuing from \eqref{6.10} and using \eqref{3.2} we obtain
\begin{equation} \label{6.14}  2C_2\prod_{\substack{p \mid d^* \\ p > 2}} \left(\frac{p - 1}{p - 2}\right)\prod_{\substack{p\le p_\ell\\ p>2}}\left(1-\frac1{p}\right)\ge \frac1{\mathcal{C}}(1  + o(1)). \end{equation}
We write the left-hand side of this equation as
\begin{equation} \label{6.15}  2C_2\prod_{\substack{p \mid d^* \\ p > p_\ell}} \left(\frac{p - 1}{p - 2}\right)\prod_{\substack{p \mid d^* \\ 2<p\leq p_\ell}} \left(\frac{p - 1}{p - 2}\right)\left(1-\frac1{p}\right)\prod_{\substack{p\le p_\ell\\ p  \nmid d^*}}\left(1-\frac1{p}\right) = 2C_2 P_1P_2P_3. \end{equation}

First, just as in \eqref{5.5},
\be
\begin{split} 1\le P_1 &\le \prod_{\substack{p \mid d^* \\ p > p_\ell}}\left(1+\frac{1}{p - 2}\right)\\&
\le \prod_{\substack{p \mid \lfloor x \rfloor^\sharp \\ p > p_\ell}}\left(1+\frac{1}{p - 2}\right)\\&
\le \prod_{\substack{\frac12 \log x \le p \le 2\log x }}\left(1+\frac{1}{p - 2}\right)\\&
= 1+O(\frac{1}{\log\log x}) .\end{split}
\ee
Thus $P_1 = 1+o(1)$. 
Next, since 
\be 
\left(\frac{p - 1}{p - 2}\right)\left(1-\frac1{p}\right) = \frac{(p-1)^2}{p(p-2)} = \left( 1 - \frac1{(p-1)^2}\right)^{-1}, 
\ee
we see
\be 
P_2 \le \prod_{p>2}\left( 1 - \frac1{(p-1)^2}\right)^{-1} = \frac1{C_2}.
\ee
Finally, by \eqref{5.6} 
\be 
P_3 = \prod_{\substack{p\le 2\log x\\ p  \nmid d^*}}\left(1-\frac1{p}\right)\left(1+O(\frac{1}{\log\log x})\right).
\ee
We conclude from \eqref{6.14} and \eqref{6.15} that 
\begin{equation}\label{6.16} \prod_{\substack{p\le 2\log x\\ p  \nmid d^*}}\left(1-\frac1{p}\right) \ge  \frac1{2\mathcal{C}}(1  + o(1)). \end{equation}
Taking logarithms we have
\be  
\sum_{\substack{p\le 2\log x\\ p  \nmid d^*}}-\log \left(1-\frac1{p}\right) \le  \log(2\mathcal{C})(1+o(1)). 
\ee
Since  
\be 
- \log(1-x) =  x+\frac{x^2}{2} +\frac{x^3}{3} +\cdots  \qquad \text{for}\  |x|<1,
\ee
we see  $ -\log(1- x) > x$ for $0<x<1$
and hence taking $\mathcal{C}=4$,
\be  
\sum_{\substack{p\le 2\log x\\ p  \nmid d^*}}\frac1{p} \le \log(8)(1+o(1))< 2.08, 
\ee
which proves \eqref{6.12}.
\end{proof}
Corollary 2 follows from \eqref{5.10} and \eqref{6.12}.
Corollary 3 follows from \eqref{6.12} because 
\be
\sum_{3\le p \le  p_{2817}}\frac1p > \log 8
\ee
 and $p_{2817} = 25583$.
Hence every sufficiently large PDC is divided by both $2$ and an odd prime $\le 25583$.

\end{document}